\documentclass[11pt,letterpaper]{amsart}

\usepackage{fancyhdr}
\usepackage{bm}
\usepackage{hyperref}
\usepackage{graphicx} 
\usepackage{nicematrix}
\usepackage{amsthm,amssymb,amsmath}
\usepackage[T1]{fontenc}
\usepackage[utf8]{inputenc}
\usepackage{hyperref}
\usepackage{lipsum}
\usepackage{mathrsfs}
\usepackage{enumitem}
\usepackage{stmaryrd}
\usepackage{setspace}
\usepackage{yfonts}
\usepackage{float}
\usepackage{mathtools}
\usepackage{parskip}
\usepackage[all,cmtip]{xy}
\usepackage{tikz-cd}
\tikzcdset{row sep/normal=50pt, column sep/normal=50pt}

\newtheorem{lemma}{Lemma}[section]

\newtheorem{remark}[lemma]{Remark}
\newtheorem{theorem}[lemma]{Theorem}
\newtheorem{theorem*}{Theorem}

\newtheorem{example*}[lemma]{Example}
\newtheorem{proposition}[lemma]{Proposition}
\newtheorem{corollary}[lemma]{Corollary}
\newtheorem{claim}[lemma]{Claim}
\newtheorem{definition}[lemma]{Definition}

\newtheorem{manualtheoreminner}{Theorem}
\newenvironment{manualtheorem}[1]{%
  \renewcommand\themanualtheoreminner{#1}%
  \manualtheoreminner
}{\endmanualtheoreminner}
\usepackage[margin=1.1in]{geometry}
\usepackage{blindtext}

\usepackage[backend=biber, style=numeric, sorting=nyt]{biblatex}
\title{Automorphisms of very general blow up}
\author{\small{Supravat Sarkar}}
\date{}
\begin{document}

\begin{abstract}
   We show that if $X$ is a projective variety of dimension $\geq 2$ that is not a rational surface, then the blow up of $X$ at sufficiently many very general points has no nontrivial automorphism. Similar results were known before for $\mathbb{P}^2$ and $\mathbb{P}^3.$
\end{abstract}
\maketitle
\begin{center}
\textbf{Keywords}: elementary transformation, automorphism, generic triviality
\end{center}
\begin{center}
\textbf{MSC Number: 14J50, 14J60} 
\end{center}

\section{Introduction}
We work throughout over the field $\mathbb{C}$ of complex numbers. For a projective variety $X$, the symmetries of $X$ are encoded in its automorphism group $\text{Aut}(X)$. It is well-known that $\text{Aut}(X)$ itself has the structure of a group scheme, making it an interesting object of study. The study of automorphisms of varieties is a vast area of research. Automorphisms of several special classes of varieties have been studied by many authors. To name just a few, \cite{matsumura1963automorphisms},\cite{esser2024automorphisms} and \cite{esser2025hypersurfaces} study it for hypersurfaces in (weighted) projective spaces, \cite{lyu2024generic} for complete intersections in projective space, \cite{mcmullen2016automorphisms} for K3 surfaces, \cite{maruyama1971automorphism} for ruled surfaces, \cite{bansal2025automorphisms} for punctual Hilbert schemes, \cite{kollar2024automorphisms} for projective bundles, \cite{bansal2026isomorphisms} for multiprojective bundles, \cite{massarenti2013automorphisms} for moduli space of curves and \cite{kouvidakis1993automorphism} for moduli space of vector bundles on a curve.

There is a philosophy in Mathematics stating that whenever one has a large enough family of objects, the structure of automorphism group of a general member of the family is simple enough. \cite{ghadernezhad2015automorphism} can be thought of as a manifestation of this philosophy in logic/model theory, \cite[Theorem 1]{wright1974asymmetric} in graph theory, \cite{bhargava2025galois} in number theory, \cite{greene1982automorphism} in complex analysis, \cite{chapman2020asymptotic} in knot theory and combinatorics, \cite{bonatti2009c} in dynamical systems, \cite{ebin1970manifold} and \cite{mounoud2015metrics} in Riemannian geometry.

In algebraic geometry also, instances of this principle is abundant. The classical result stating that a general curve of genus $\geq 3$ has no nontrivial automorphism can be thought of as a manifestation of this philosophy. Another example is the generic triviality of automorphisms of hypersurfaces and complete intersections in projective space (with a few explicit exceptions), see \cite{matsumura1963automorphisms} and \cite{lyu2024generic}. From \cite[Theorem 0.2]{lyu2024generic} one obtains another result of the same flavour: if $X$ is a smooth projective variety of dimension $\geq 2$ and $D$ an ample divisor on $X$, then for all sufficiently large $d$, a general member of $|dD|$ has no nontrivial automorphism.

The goal of this paper is to prove an analogue of the above result, where instead of taking a general hypersurface in $X$, we do a birational transformation: we blow up a large collection of very general points in $X$. The main result of this paper is the following, which gives yet another manifestation of the above-mentioned philosophy.
\begin{manualtheorem}{A} \label{A}
Let $X$ be a projective variety of dimension $\geq 2$, and assume that
$X$ is not a rational surface. Then for all sufficiently large integers $r$ and
$p_1,\ldots,p_r\in X$ very general, the blow-up of $X$ at
$p_1,p_2,\ldots,p_r$ has no nontrivial automorphism.
\end{manualtheorem}

Though we believe Theorem \ref{A} should also hold when $X$ is a rational surface, our methods do not work in that case. For $X=\mathbb{P}^2$ this was proved in \cite{koitabashi1988automorphism} and \cite{hirschowitz1988symetries}. Recently in \cite{gachet2025pseudoautomorphism} a very similar problem was studied for $\mathbb{P}^3$, where instead of automorphism one considers pseudoautomorphism.

We give an application of this result in \S 6, constructing a smooth projective surface with trivial automorphism group and infinitely many elliptic bundle structures.
\section{Notation and preliminaries}
\begin{itemize}
\item For a positive integer $n$, we denote the set $\{1,2,\cdots,n\}$ by $[n]$.
\item For a normal projective variety $X$, the Picard rank of $X$ will be denoted by $\rho(X).$
\item For a smooth projective variety $X$, the canonical bundle of $X$ will be denoted by $\omega_X.$
\item For a birational morphism $f:X\to Y$ of normal projective varieties, we denote its exceptional locus by $\text{Ex}(f)\subset X$ and fundamental locus by $\text{Fundloc}(f)\subset Y.$
\item For a variety $X$ and a positive integer $r$, we say that $X$ and \textit{very general} points $p_1, p_2,\cdots ,p_r$ in $X$ satisfy a property if there is a countable union of proper subvarieties $\cup_iF_i\subsetneq X^r$ such that the property is satisfied whenever $(p_1,p_2,\cdots, p_r)\not\in \cup_iF_i$.
\item A morphism $f:X\to Y$ of projective varieties is called a \textit{smooth blow-up} if both $X$ and $Y$ are smooth and $f$ is the blow-up along a smooth subvariety of $Y$.
\item For points $p_1,p_2,\cdots,p_r$ in a variety $X$, we denote the blow-up of $X$ along $p_i$'s by $\operatorname{Bl}_{p_1,p_2,\cdots,p_r}X$.
\item The notation $r>>0$ will mean $r$ is a sufficiently large integer.
    \item We will use the notion of a $(-1)$-hypersurface, which can be thought of as a higher dimensional generalization of a $(-1)$-curve on a surface.
    \begin{definition}\label{-1 hypersurface}
Let $X$ be a normal projective variety of dimension $n$. A prime divisor
$E\subset X$ is called a $(-1)$-hypersurface if the following equivalent
conditions hold.

\begin{enumerate}
\item There is a Zariski neighborhood $V$ of $E$ in $X$, a smooth variety
$U$, and a morphism $V\to U$ which is the blow-up of $U$ at a smooth point
with exceptional divisor $E$.

\item Condition (1) holds with $V$ and $U$ analytic open.

\item $E\cong\mathbb{P}^{n-1}$ and lies in the smooth locus of $X$, with normal bundle
$N_{E/X}\cong\mathcal{O}_{\mathbb{P}^{n-1}}(-1)$.

\item There exists a morphism $X\to Y$ that is the blow-up at a smooth point of $Y$ with exceptional divisor $E$.
\end{enumerate}
\end{definition}

Here the implications \[(4)\Rightarrow
(1)\Rightarrow(2)\Rightarrow(3)
\] are clear, and $(3)\Rightarrow (4)$ can be proven using contraction theorem \cite[Theorem 3.7(3)]{KM} or \cite[Theorem 3.1]{andreatta2002special}. 

We need the following Lemma.
\begin{lemma}\label{contr hypersurface}
Let $X$ be a normal projective variety of dimension $\ge 3$. Then there
are only finitely many $(-1)$-hypersurfaces, and they are pairwise disjoint.
If $E_1,E_2,\ldots,E_r$
are $(-1)$-hypersurfaces in $X$, then there is a normal projective
variety $Y$ and a contraction $\pi:X\to Y$
which is the blow-up of $Y$ at $r$ smooth points with exceptional
divisor $\bigcup_i E_i.$
\end{lemma}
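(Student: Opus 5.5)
We first show that two distinct $(-1)$-hypersurfaces are disjoint, then deduce finiteness, and finally build the contraction by contracting the $E_i$ one at a time. Throughout, $n=\dim X\ge 3$.

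\textbf{Disjointness.} Let $E_1\neq E_2$ be $(-1)$-hypersurfaces and suppose $E_1\cap E_2\neq\emptyset$. By Definition \ref{-1 hypersurface}(3), $E_1\cong\mathbb{P}^{n-1}$ lies in the smooth locus and $N_{E_1/X}\cong\mathcal{O}(-1)$. The intersection $E_1\cap E_2$ is a nonempty effective Cartier divisor on $E_1$, given by the restriction $E_2|_{E_1}$. It is not all of $E_1$, since $E_1\neq E_2$ are prime. Hence $E_2|_{E_1}\cong\mathcal{O}_{\mathbb{P}^{n-1}}(a)$ with $a\ge 1$.

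Take a line $\ell\subset E_1$ meeting $E_2\cap E_1$ but not contained in it. This is possible because $n-1\ge 2$, so $E_1\cap E_2$ is a hypersurface of $E_1$ and a general line through a point of it is not contained in it. Then
\[E_1\cdot\ell=-1,\qquad E_2\cdot\ell=a\ge 1.\]
Now use $E_2\cong\mathbb{P}^{n-1}$ with $E_2|_{E_2}=\mathcal{O}(-1)$ symmetrically. Pick a line $m\subset E_2$ through a point of $E_1\cap E_2$. All lines in $E_1$ are numerically equivalent in $X$, and likewise all lines in $E_2$.

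The curve $E_1\cap E_2$ has dimension $n-2\ge 1$, so it contains a curve $C$. Let $d_1$ be the degree of $C$ in $E_1\cong\mathbb{P}^{n-1}$ and $d_2$ its degree in $E_2$; then $C\equiv d_1\ell\equiv d_2 m$. Intersecting with $E_1$ gives
\[E_1\cdot C=-d_1<0.\]
On the other hand, $C\not\subset$ any problem for $E_1\cdot m$: the line $m$ is not contained in $E_1$ if chosen generally in $E_2$, so $E_1\cdot m\ge 0$. Therefore
\[E_1\cdot C=d_2\,(E_1\cdot m)\ge 0,\]
a contradiction. This is the crux, and it is exactly where $n\ge 3$ is used: it guarantees $\dim(E_1\cap E_2)\ge 1$. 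For $n=2$ the statement indeed fails, since two $(-1)$-curves can meet.

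\textbf{Finiteness.} The classes $[E_i]\in N^1(X)$ are linearly independent. Indeed, if $\sum c_iE_i\equiv 0$, intersect with a line $\ell_j\subset E_j$. By disjointness $E_i\cdot\ell_j=0$ for $i\neq j$, while $E_j\cdot\ell_j=-1$, so $c_j=0$. Hence the number of $(-1)$-hypersurfaces is at most $\rho(X)<\infty$.

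\textbf{Contraction.} We proceed by induction on $r$. By Definition \ref{-1 hypersurface}(4), there is a blow-up $\pi_1:X\to Y_1$ at a smooth point with exceptional divisor $E_1$. Since $E_j$ is disjoint from $E_1$ for $j\geq 2$, $\pi_1$ is an isomorphism near $E_j$. Therefore $\pi_1(E_j)$ satisfies condition (1) in $Y_1$, so it is a $(-1)$-hypersurface of $Y_1$. Moreover $Y_1$ is normal and projective.

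Applying induction to $Y_1$ with $\pi_1(E_2),\dots,\pi_1(E_r)$ gives $Y_1\to Y$. The composite $X\to Y$ is the blow-up at the $r$ distinct smooth images of the $E_i$: these points are distinct because the $E_i$ are disjoint, and blowing up distinct points is local. The main obstacle is the disjointness step; the rest is formal.
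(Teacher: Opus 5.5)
Your proof is correct, but your disjointness and finiteness steps follow a different route from the paper.

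The paper proves disjointness using condition (4). It takes $f\colon X\to X_1$, the blow-down of $E_1$. The restriction $f|_{E_2}$ is nonconstant, hence finite, since $\mathbb{P}^{n-1}$ has Picard number one. So its fibre $E_1\cap E_2$ over $f(E_1)$ is finite, contradicting that it is a nonempty Cartier divisor on $E_2$ of dimension $n-2\ge 1$.

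You argue numerically from condition (3) alone. For a curve $C\subset E_1\cap E_2$, $N_{E_1/X}\cong\mathcal{O}(-1)$ gives $E_1\cdot C<0$, while effectivity of $\mathcal{O}_X(E_1)|_{E_2}$ gives $E_1\cdot C\ge 0$. The underlying tension is the same, but your version does not need the contraction at this stage, and so does not need the contraction theorem behind $(3)\Rightarrow(4)$.

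Your argument can be shortened. Since $\mathcal{O}_X(E_1)|_{E_2}\cong\mathcal{O}_{\mathbb{P}^{n-1}}(b)$ with $b\ge 1$, you get $E_1\cdot C=b\deg_{E_2}C>0$ directly. The lines $\ell,m$ and the number $a=E_2\cdot\ell$ are then unnecessary. Also, the garbled clause before ``the line $m$ is not contained in $E_1$'' should be rewritten.

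For finiteness, the paper reads off the bound $\rho(X)-1$ from the contraction statement, since each point blow-down drops the Picard number by one. You instead show directly that the classes $[E_i]$ are linearly independent in $N^1(X)$ by pairing with lines $\ell_j\subset E_j$. This gives the slightly weaker bound $\rho(X)$, which suffices.

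Your contraction step is the same iterated use of (4) as in the paper, with the routine checks written out.
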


\begin{proof}
Suppose $E_1\neq E_2$
are $(-1)$-hypersurfaces in $X$, with $E_1\cap E_2\neq\varnothing.$
Let $f:X\to X_1$
be a smooth blow-up at a point with exceptional divisor $E_1$.
As $E_2\cong\mathbb{P}^{n-1}$
and $f|_{E_2}$
is nonconstant, $f|_{E_2}$
must be finite. So, $E_1\cap E_2$
is finite. But $E_1\cap E_2$
is an effective Cartier divisor in $E_2$, so has
positive dimension, as $\dim E_2=n-1\ge2$. This contradiction shows that $(-1)$-hypersurfaces in $X$ are disjoint.

The last statement
is clear by repeated application of (4) of Definition \ref{-1 hypersurface}. From this we see that the number
of $(-1)$-hypersurfaces is at most $\rho(X)-1$, hence finite.
\end{proof}
\item For a coherent sheaf $\mathcal{F}$ on a variety $X$, and a point $x\in X$, $\mathcal{F}(x)$ denotes the fibre of $\mathcal{F}$ a $x$, a finite dimensional $k(x)$-vector space. For a map $\mathcal{F}\xrightarrow{\phi}\mathcal{G}$ of coherent sheaves on $X$, we thus get an induced $k(x)$-linear map $\mathcal{F}(x)\xrightarrow{\phi(x)}\mathcal{G}(x)$.
\item We recall the notion of elementary transformation of ruled surfaces.
\begin{definition}\label{elementary transformation}
Let $\pi\colon X\longrightarrow C$
be a minimal ruled surface, and let $p_1,\ldots,p_r\in X$
be in distinct fibres of $\pi$. We define the elementary transformation of $X$ at $\underline{p}$ by $\operatorname{Elm}(X,\underline{p}):=(X',\underline{q}),$
constructed as follows.

Let $F_i=\pi^{-1}\pi(p_i)$, and $h\colon Y:=\operatorname{Bl}_{\underline{p}} X\longrightarrow X$ the blow-up map. The strict transform $\widetilde{F_i}$ of $F_i$ in $Y$ is a $(-1)$-curve. Let $h'\colon Y\longrightarrow X'$
be the contraction of all $\widetilde{F_i}$'s, and $q_i:=h'(\widetilde{F_i}).$ So, $\pi'\colon X'\longrightarrow C$
is again a minimal ruled surface, and $q_i$'s
are in distinct fibres of $\pi'$. Moreover, there is a natural birational map $h'\circ h^{-1}:X\dashrightarrow X'$. In summary, we have the following commutative diagram:
\[
\begin{tikzcd}[column sep=4em, row sep=3em]
& Y \arrow[dl,"h"'] \arrow[dr,"h'"] & \\
X \arrow[rr,dashed] \arrow[dr,"\pi"'] & & X' \arrow[dl,"\pi'"] \\
& C &
\end{tikzcd}
\]
\end{definition}
We record here a result from the theory of elementary transformations.
\begin{lemma}\label{maruyama}
Let $\mathcal{E}$ be a vector bundle of rank $2$ on a smooth projective curve $C$ and $X=\mathbb{P}_C(\mathcal{E})$.
Let $z_1, z_2,...,z_r$ be distinct points in $C$, and $\tau:\mathcal{E}\longrightarrow \oplus_ik(z_i)$
be a surjection, where $k(z_i)$ is regarded as a skyscraper sheaf at $z_i$. Note that $\ker\!\bigl(\tau(z_i):\mathcal{E}(z_i)\to k\bigr)$ is a
hyperplane in $\mathcal{E}(z_i),$
hence corresponds to a point $p_i\in\mathbb{P}(\mathcal{E}(z_i)).$ Let $\mathcal{F}:=\ker\tau,$
 a rank $2$ vector bundle and $\mathcal{F}\xrightarrow{\alpha}\mathcal{E}$ the sheaf inclusion. Let $q_i\in\mathbb{P}(\mathcal{F}(z_i))$
be the point corresponding to the hyperplane $\ker\!\bigl(\alpha(z_i):\mathcal{F}(z_i)\rightarrow\mathcal{E}(z_i)\bigr).$

Then there is a natural isomorphism 
\[
\operatorname{Elm}(\mathbb{P}(\mathcal{E}),\underline{p})
\cong
(\mathbb{P}(\mathcal{F}),\underline{q}),
\] over $C$,
and the natural birational map $\mathbb{P}(\mathcal{E})
\dashrightarrow
\mathbb{P}(\mathcal{F})$ as in Definition \ref{elementary transformation}
is same as $\alpha^*$.
\end{lemma}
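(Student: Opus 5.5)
The plan is to reduce everything to a local computation near each $z_i$, resolve the rational map $\alpha^*$ by hand, and then use that a birational morphism of smooth surfaces is determined by the curves it contracts. Throughout I use the convention forced by the statement: a point of $\mathbb{P}(\mathcal{E}(z))$ is a hyperplane in $\mathcal{E}(z)$, i.e. a one-dimensional quotient $\mathcal{E}(z)\to L$. With this convention, $\alpha^*\colon \mathbb{P}(\mathcal{E})\dashrightarrow\mathbb{P}(\mathcal{F})$ sends a quotient $\varphi$ to $\varphi\circ\alpha(z)$, and it is defined wherever this composite is nonzero.

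\textbf{Step 1 (away from the $z_i$).} Since $\tau$ is supported on $\{z_1,\dots,z_r\}$, the map $\alpha$ is an isomorphism over $C\setminus\{z_i\}$. Hence $\alpha^*$ is an isomorphism over that open set. This agrees with $h'\circ h^{-1}$, which is also an isomorphism off the fibres $F_i$.

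\textbf{Step 2 (local model at $z_i$).} Work over a neighbourhood $\Delta$ of $z_i$ with local parameter $t$, where $\mathcal{E}$ is trivial.
\begin{itemize}
\item Choose a local frame $e_1,e_2$ of $\mathcal{E}$ with $\tau(e_1)=0$ and $\tau(e_2)=1$ in $k(z_i)$. Then $\mathcal{F}$ has the local frame $f_1=e_1$, $f_2=te_2$.
\item Write a quotient $\varphi$ of $\mathcal{E}$ in coordinates $[a:b]=[\varphi(e_1):\varphi(e_2)]$. Then $\varphi\circ\alpha$ has coordinates $[a:tb]$ in the frame $f_1,f_2$, so $\alpha^*$ is $([a:b],t)\mapsto([a:tb],t)$ on $\mathbb{P}^1\times\Delta$.
\item This map is undefined exactly at $t=0$, $[a:b]=[0:1]$. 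That point is the quotient with kernel $\langle e_1\rangle=\ker\tau(z_i)$, i.e. exactly $p_i$.
\item Every other point of the fibre $F_i=\{t=0\}$ maps to $[1:0]$. This is the quotient with kernel $\langle f_2\rangle=\ker\alpha(z_i)$, i.e. exactly $q_i$.
\item In the chart $b=1$, $a=sx$, $t=s$ of the blow-up at $p_i$, the map becomes $[sx:s]=[x:1]$. So $\alpha^*\circ h$ extends to a morphism near the exceptional curve, and it maps the exceptional curve isomorphically onto the fibre of $\mathbb{P}(\mathcal{F})$ over $z_i$.
\end{itemize}

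\textbf{Step 3 (global identification).} Steps 1 and 2 give a birational morphism $g\colon Y=\operatorname{Bl}_{\underline p}\mathbb{P}(\mathcal{E})\to\mathbb{P}(\mathcal{F})$ over $C$.
\begin{itemize}
\item $g$ contracts exactly the strict transforms $\widetilde{F_i}$, sending $\widetilde{F_i}$ to $q_i$.
\item $g$ is an isomorphism elsewhere; in particular it is finite on the exceptional curves of $h$.
\item The contraction $h'\colon Y\to X'$ of the $(-1)$-curves $\widetilde{F_i}$ has the same exceptional locus. So $g$ is constant on the fibres of $h'$ and factors as $g=\psi\circ h'$, because $X'$ is normal and $h'$ has connected fibres (rigidity lemma / Zariski's main theorem).
\item The induced $\psi\colon X'\to\mathbb{P}(\mathcal{F})$ is a birational, quasi-finite morphism onto a normal surface, hence an isomorphism. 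It commutes with the maps to $C$, and $\psi(h'(\widetilde{F_i}))=q_i$.
\end{itemize}
Consequently $\psi\circ h'\circ h^{-1}=g\circ h^{-1}=\alpha^*$, which is the asserted compatibility of the birational maps.

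The only real content is Step 2: fixing the quotient-versus-line convention correctly so that the indeterminacy point of $\alpha^*$ is $p_i$ and the image of the contracted fibre is $q_i$, and verifying that the blow-up resolves the map. After that, Step 3 is the standard uniqueness of contractions.
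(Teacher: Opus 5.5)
Your proof is correct. It differs from the paper's, which gives no argument for this lemma and only refers to Maruyama's theory of elementary transformations of vector bundles. That theory relates the kernel of a surjection onto a sheaf supported on a divisor to the operation of blowing up in $\mathbb{P}(\mathcal{E})$ and contracting the strict transform of the old fibres. Your route is a self-contained proof for rank $2$ over a curve:
\begin{itemize}
\item In the frame $f_1=e_1$, $f_2=te_2$ the map $\alpha^*$ becomes $([a:b],t)\mapsto([a:tb],t)$.
\item A single blow-up at each $p_i$ resolves it.
\item The resulting morphism $g$ collapses exactly the $\widetilde{F_i}$ onto the $q_i$ and is an isomorphism elsewhere.
\item Rigidity and Zariski's main theorem then identify $g$ with $\psi\circ h'$, with $\psi\colon X'\to\mathbb{P}(\mathcal{F})$ an isomorphism over $C$.
\end{itemize}
The citation buys brevity and generality. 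Your version pins down explicitly the convention-dependent facts that Proposition~\ref{Propo1} uses later:
\begin{itemize}
\item the indeterminacy point of $\alpha^*$ over $z_i$ is the point given by the hyperplane $\ker\tau(z_i)$;
\item the old fibre collapses to the point given by $\ker\alpha(z_i)$;
\item $h'\circ h^{-1}$ is literally $\alpha^*$.
\end{itemize}
With the bare reference, a reader has to extract these by matching quotient-versus-line conventions.

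One small thing to tidy. The chart $b=1$, $a=sx$, $t=s$ misses the point $E_i\cap\widetilde{F_i}$, so by itself it does not show that $\alpha^*\circ h$ extends along all of $E_i$, or that $E_i$ maps isomorphically onto the fibre of $\mathbb{P}(\mathcal{F})$. Add the chart $a=u$, $t=uy$, where the map is $(u,y)\mapsto([1:y],uy)$. There $E_i=\{u=0\}$ goes isomorphically onto $\{[1:y]\}$, and $\widetilde{F_i}=\{y=0\}$ visibly goes to $[1:0]=q_i$.

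Step 3 also needs that $g$ is an isomorphism off $\bigcup_i\widetilde{F_i}$. Together with your first chart, where $g$ is $(x,s)\mapsto([x:1],s)$, an isomorphism onto $\{b'\neq 0\}$, this gives exactly that fact.
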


\begin{proof}
See \cite{maruyama2006elementary}.
\end{proof}
\item For a $2$-dimensional $\mathbb{C}$-vector space $V$, we have the trace map
\[
\operatorname{tr}:\operatorname{End}(V)\longrightarrow \mathbb{C},
\]
and let $\mathfrak{sl}(V)$ denote its kernel. The short exact sequence
\[
0\longrightarrow \mathfrak{sl}(V)\longrightarrow
\operatorname{End}(V)
\xrightarrow{\operatorname{tr}}
\mathbb{C}
\longrightarrow 0
\]
has a natural splitting
\[
\mathbb{C}\longrightarrow \operatorname{End}(V),
\qquad
1\longmapsto \frac{1}{2}\operatorname{id}.
\]
Hence,
\[
\operatorname{End}(V)\cong
\mathbb{C}\oplus\mathfrak{sl}(V)
\]
naturally.

We also have a natural nondegenerate pairing
\[
\operatorname{End}(V)\times \operatorname{End}(V)
\longrightarrow
\mathbb{C},
\qquad
(\varphi,\psi)\longmapsto
\operatorname{tr}(\varphi\psi).
\]
Its restriction to $\mathfrak{sl}(V)$ is also nondegenerate. Consequently, we have natural isomorphisms
\[
\operatorname{End}(V)\cong \operatorname{End}(V)^*,
\qquad
\mathfrak{sl}(V)\cong \mathfrak{sl}(V)^*.
\]
Similarly, if $\mathcal{E}$ is a rank two vector bundle on a smooth projective curve $C$, then we have the split short exact sequence
\[
0\longrightarrow
\mathfrak{sl}(\mathcal{E})
\longrightarrow
\mathcal{E}nd(\mathcal{E})
\xrightarrow{\operatorname{tr}}
\mathcal{O}_C
\longrightarrow
0,
\]
and natural isomorphisms
\[
\mathcal{E}nd(\mathcal{E})
\cong
\mathcal{E}nd(\mathcal{E})^*,
\qquad
\mathfrak{sl}(\mathcal{E})
\cong
\mathfrak{sl}(\mathcal{E})^*.
\]
\item Let $\mathcal{E}$ and $L$ be a vector bundle and a line bundle on a smooth
projective curve $C$. If $\eta:\mathcal{E}\longrightarrow \mathcal{E}\otimes L$ is a map, then $\eta^2:\mathcal{E}\longrightarrow \mathcal{E}\otimes L^2$ will denote the composition
\[
\mathcal{E}
\xrightarrow{\eta}
\mathcal{E}\otimes L
\xrightarrow{\eta\otimes\operatorname{id}_L}
\mathcal{E}\otimes L^{2}.
\]

Similarly, if $
\eta:\mathcal{E}\otimes L\longrightarrow \mathcal{E}$
is a map, then $\eta^{2}:\mathcal{E}\otimes L^{2}\longrightarrow \mathcal{E}$ is
defined by the composition
\[
\mathcal{E}\otimes L^{2}
\xrightarrow{\eta\otimes\operatorname{id}_L}
\mathcal{E}\otimes L
\xrightarrow{\eta}
\mathcal{E}.
\]
\end{itemize}
\section{An auxiliary result}
The following result will play a crucial role in the proof of Theorem \ref{A}.
\begin{theorem}\label{permute}
Let $X$ be a normal projective variety, and let $Y$ be a proper closed subset of $X$. Then there is a nonnegative integer $r_0$ such that for all $r\ge r_0$ and $p_1,\ldots,p_r\in X$ very general, there is no nontrivial automorphism $\varphi$ of $X$ such that
\begin{equation}\label{contain}
\varphi(\{p_1,\ldots,p_r\})\subseteq Y\cup\{p_1,\ldots,p_r\}.
\end{equation} If $\dim X\geq 2$, we can in fact take $r_0=\dim\operatorname{Aut}^0(X)+1.$
\end{theorem}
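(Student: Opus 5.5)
The plan is a dimension count on incidence varieties inside $\operatorname{Aut}(X)\times X^r$, one for each pattern of where the points can go. First, $\operatorname{Aut}(X)$ is a group scheme locally of finite type with countably many connected components. Each component is a translate $g\operatorname{Aut}^0(X)$ of the identity component, so it has dimension $d:=\dim\operatorname{Aut}^0(X)$. Since a countable union of proper subvarieties is allowed, we can fix one component $G=g\operatorname{Aut}^0(X)$ at a time. Now suppose $\varphi\in G$ satisfies \eqref{contain}. Then there is a subset $S\subseteq[r]$ and a map $\sigma:[r]\setminus S\to[r]$ such that $\varphi(p_i)\in Y$ for $i\in S$ and $\varphi(p_i)=p_{\sigma(i)}$ for $i\notin S$. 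There are finitely many such pairs $(S,\sigma)$.

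For each pair $(S,\sigma)$, consider the closed set
\[
Z_{S,\sigma}=\{(\varphi,\underline p)\in (G\setminus\{\mathrm{id}\})\times X^r:\ \varphi(p_i)\in Y \text{ for } i\in S,\ \varphi(p_i)=p_{\sigma(i)} \text{ for } i\notin S\}.
\]
The goal is to show that the projection of $Z_{S,\sigma}$ to $X^r$ is not dominant. If it is not dominant, its closure is a proper subvariety. The union of these closures over all $G$ and all $(S,\sigma)$ is then a countable union of proper subvarieties, and avoiding it gives the statement.

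To bound $\dim Z_{S,\sigma}$, fix $\varphi$ and look at its fibre in $X^r$. The condition $\varphi(p_i)\in Y$ cuts each such coordinate to $\varphi^{-1}(Y)$, which loses at least one dimension. For $i\notin S$, decompose the functional graph of $\sigma$ into cycles and trees feeding into them. A coordinate $p_i$ with $\sigma(i)\ne i$ is determined by $p_{\sigma(i)}$ via $p_i=\varphi^{-1}(p_{\sigma(i)})$, so it contributes no free dimension; a cycle of length $\ell$ forces its base point into $\mathrm{Fix}(\varphi^\ell)$. A fixed point $\sigma(i)=i$ forces $p_i\in\mathrm{Fix}(\varphi)$.

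The key step, and the main obstacle, is uniformity of this dimension loss. Each coordinate should lose at least one dimension, i.e.\ $\varphi^{-1}(Y)$ and $\mathrm{Fix}(\varphi^\ell)$ should be proper closed subsets. For $\varphi^{-1}(Y)$ this is clear. For $\mathrm{Fix}(\varphi^\ell)$ it fails when $\varphi^\ell=\mathrm{id}$. I would handle that case via the trees instead: for a finite-order $\varphi$ on a cycle, the coordinates are related by $p_{\sigma(i)}=\varphi(p_i)$ with $p_{\sigma(i)}\ne p_i$, so already a single such relation removes $\dim X$ dimensions. In every case, then, each "constraint chain" loses at least $1$ dimension per point in $S$ or per nontrivial relation, and the total loss is at least $r$ minus the number of unconstrained points. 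Unconstrained points do not exist, since every index lies in $S$ or in the domain of $\sigma$.

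Done carefully, this gives a fibre over $\varphi$ of dimension at most $nr-r$ in general. To get the sharp $r_0=d+1$, I would count more precisely. Stratify $G\setminus\{\mathrm{id}\}$ by constructible pieces on which $\dim\mathrm{Fix}(\varphi^\ell)$ is constant, and use upper semicontinuity so that the fibre dimension is at most $nr-r$ over each piece. This yields $\dim Z_{S,\sigma}\le d+nr-r<nr$ once $r\ge d+1$, so the projection is not dominant.

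For $\dim X=1$ only the existence of some $r_0$ is asserted. There the same count works, with care for components of $G$ consisting of finite-order elements.

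The delicate points I expect to need checking are the following:
\begin{itemize}
\item that each constraint genuinely drops dimension for every $\varphi\ne\mathrm{id}$, including finite-order $\varphi$ on cycles and fixed-point loci that are divisors, each costing at least $1$;
\item the constructibility needed to make the fibre-dimension argument rigorous on the non-finite-type group $\operatorname{Aut}(X)$.
\end{itemize}
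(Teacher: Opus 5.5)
Your proposal is correct for $\dim X\ge 2$ and is essentially the paper's proof. You work one component of $\operatorname{Aut}(X)$ at a time, split the incidence variety according to where each $p_i$ is sent, and bound the fibre over a fixed $\varphi\ne\mathrm{id}$ by $nr-r$: points sent into $Y$ and $\varphi$-fixed points each cost at least one dimension, and an $\ell$-cycle with $\ell\ge 2$ costs $(\ell-1)n\ge\ell$. (No stratification is needed, since the pointwise bound together with $\dim G=d$ already gives $\dim Z_{S,\sigma}\le d+nr-r$.)

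For $n=1$, however, your claimed bound $nr-r$ is false: an involution swapping $p_1,p_2$ gives a $1$-dimensional fibre when $r=2$. The ``care'' you allude to is exactly the paper's observation that each cycle of length $\ell\ge 2$ still costs $\ell-1\ge \ell/2$, so fibres have dimension at most $r/2$ and one can take $r_0=2d+1$.
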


\begin{proof}
Let $\operatorname{Aut}(X)$ be the automorphism group scheme of $X$. The scheme
$\operatorname{Aut}(X)\setminus\{\mathrm{id}\}$ has at most countably many connected
components; call them $G_1,G_2,\ldots$.

Let $n=\dim X$. Suppose $r\in\mathbb{N}$ is such that the conclusion of the Theorem does not hold. This more precisely means the following: given any countable union of proper subvarieties of $X^r$, there is a point $\underline{p}\in X^r$ outside that countable union and a nontrivial automorphism $\phi$ of $X$ such that \eqref{contain} is satisfied.

Let $Z_r=(X\setminus Y)^r\setminus\Delta$, where $\Delta$ is the diagonal. Let
\[
\Gamma_k=\{(\underline{p},\varphi)\in Z_r\times G_k\mid
\varphi(\{p_1,\ldots,p_r\})\subseteq
\{p_1,\ldots,p_r\}\cup Y\},
\]
a closed subset of $Z_r\times G_k$.

Let $f_k:\Gamma_k\to Z_r$ and $g_k:\Gamma_k\to G_k$ be the projections. If $\overline{f_k(\Gamma_k)}\neq Z_r$ for all $k$, then for
$\underline{p}\notin\bigcup_k\overline{f_k(\Gamma_k)}$, there is no nontrivial automorphism $\varphi$ of $X$ satisfying \eqref{contain}, a contradiction.

So there is $k$ with $\overline{f_k(\Gamma_k)}=Z_r$, hence
$\dim\Gamma_k\ge \dim Z_r=nr$. So, there is $\varphi\in G_k$ such that
\begin{equation}\label{dim}
\dim g_k^{-1}(\varphi)\ge nr-h^0(X,T_X),
\end{equation}
as $\dim G_k=\dim\operatorname{Aut}^0(X)=h^0(X,T_X)$.

For $S\subseteq [r]$, an injection $\eta:S\to [r]$, and $\phi\in G_k$, define
\[
W_{S,\eta,\phi}=
\left\{
\underline{p}\in Z_r\ \middle|\
\begin{array}{l}
\varphi(p_i)\in Y \text{ for all } i\notin S,\\
\varphi(p_i)=p_{\eta(i)} \text{ for all } i\in S
\end{array}
\right\}.
\] Note that
\[
g_k^{-1}(\varphi)\cong f_k(g_k^{-1}(\varphi))=\bigcup_{S,\eta}W_{S,\eta,\phi}.
\]
So there is $S,\eta$ such that
\[
\dim W_{S,\eta,\phi}\ge nr-h^0(X,T_X).
\]

Let $S\setminus\eta(S)=\{j_1,j_2,\ldots,j_t\}$, $[r]\setminus(S\cup\eta(S))=\{k_1,\ldots,k_m\}$. The map $\eta$ permutes the elements of the set
\[
S'=\{\,i\in S\mid \eta^l(i)\in S\text{ for all }l\ge1\,\},
\]
so decomposes $S'$ into cycles. Let $i_1,i_2,\ldots,i_s\in S'$ be representatives of distinct cycles. For $1\le u\le s$, let $a_u$ be the length of the cycle containing $i_u$, that is, the smallest natural number such that $\eta^{a_u}(i_u)=i_u$.

Without loss of generality, assume $a_1\le a_2\le\cdots\le a_s$.
Let $0\le s'\le s$ be the integer such that $a_u=1$ if and only if $u\le s'$. So we have 
\[
S'=\{i_1,\ldots,i_{s'}\}\cup
\{\eta^l(i_u)\mid s'<u\le s,\ 0\le l<a_u\}.
\]
Finally, for $1\le v\le t$, let $b_v\in\mathbb N$ be the unique integer such that $\eta^{b_v}(j_v)\notin S$, $\eta^l(j_v)\in S$ for all $0\le l<b_v$. Let $s''=s-s'$.

In summary, $[r]$ is partitioned in the disjoint union of the following sets:
\[
\{i_u\mid 1\le u\le s'\},\qquad
\{\eta^l(i_u)\mid s'<u\le s,\ 0\le l<a_u\},
\]
\[
\{\eta^l(j_v)\mid 1\le v\le t,\ 0\le l\le b_v\},\qquad
\{k_w\mid 1\le w\le m\}.
\]

So,
\[
r=s'+\sum_{s'<u\le s}a_u+\sum_{1\le v\le t}(b_v+1)+m.
\]

Hence,
\[
r-m-t=s'+\sum_{s'<u\le s}a_u+\sum_{1\le v\le t}b_v
\ge s'+2s''+t.
\]

Also,
\[
r\ge \sum_{s'<u\le s}a_u\ge 2s''.
\]

Also, the natural map
\[
W_{S,\eta,\phi}\longrightarrow
\prod_{u=1}^{s}X
\times
\prod_{v=1}^{t}X
\times
\prod_{w=1}^{m}X,
\]
given by
\[
\underline{p}\longmapsto
\bigl((p_{i_u})_u,\,(p_{j_v})_v,\,(p_{k_w})_w\bigr),
\]
is injective, and has image contained in

\[
W'=
\left\{
\bigl((x_u)_u,(y_v)_v,(z_w)_w\bigr)\
\middle|\
\begin{array}{l}
\varphi^{a_u}(x_u)=x_u,\\
\varphi^{\,b_v+1}(y_v)\in Y,\\
\varphi(z_w)\in Y
\end{array}
\text{ for all }u,v,w
\right\}.
\]
Note that
\[
W'\subseteq
\prod_{u=1}^{s'}\operatorname{Fix}(\varphi)
\times
\prod_{s'<u\le s}X
\times
\prod_{v=1}^{t}\varphi^{-(b_v+1)}(Y)
\times
\prod_{w=1}^{m}\varphi^{-1}(Y),
\]
where $\operatorname{Fix}(\varphi)=\{x\in X| \phi(x)=x\}.$

So,
\[
\begin{aligned}
nr-h^0(X,T_X)
&\le \dim W_{S,\eta,\phi}
\le \dim W' \\
&\le s'\dim\operatorname{Fix}(\varphi)
+s''n+(t+m)(n-1) \\
&\le (s'+t+m)(n-1)+s''n,
\end{aligned}
\]
the last inequality follows as $\operatorname{Fix}(\varphi)\neq X$ as $\phi$ is nontrivial.

So,
\[
\begin{aligned}
r
&\le h^0(X,T_X)+s''n+(n-1)(s'+t+m-r) \\
&\le h^0(X,T_X)+s''n-(n-1)(2s''+t) \\
&= h^0(X,T_X)-(n-2)s''-(n-1)t \\
&\le h^0(X,T_X)-(n-2)s''.
\end{aligned}
\]

For $n\ge2$, we thus get $r\le h^0(X,T_X)$. So we can take
$r_0=h^0(X,T_X)+1$.

Now assume $n=1$. So,
\[
r\le h^0(X,T_X)+s''
\le h^0(X,T_X)+\frac{r}{2}.
\]

Hence, $r\le 2h^0(X,T_X).$
So we can take $r_0=2h^0(X,T_X)+1.$
\end{proof}
\begin{corollary}\label{r'}
Let $X$ be a smooth projective surface and $m\geq 0$ an integer. Then there exists an integer $r_0'=r_0'(X,m)$ such that
for $r\ge r_0'$, and
$q_1,\ldots,q_m,p_1,\ldots,p_r\in X$ very general, $Y:=\operatorname{Bl}_{\underline{q}}(X)$
has no nontrivial automorphism fixing $\widetilde{p_i}$, where $\widetilde{p_i}$ is the preimage of $p_i$ in $Y$.
\end{corollary}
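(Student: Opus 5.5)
The plan is to apply Theorem \ref{permute} to $Y=\operatorname{Bl}_{\underline{q}}X$ itself, with the empty closed subset. The only subtlety is that $Y$ depends on $\underline{q}$, so $r_0'$ must be chosen uniformly in $\underline{q}$.

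First, the uniform bound. Since $Y$ is a smooth projective surface, Theorem \ref{permute} with $\dim Y=2$ gives $r_0=\dim\operatorname{Aut}^0(Y)+1=h^0(Y,T_Y)+1$. Because $Y\to X$ is birational, $h^0(Y,T_Y)\le h^0(X,T_X)$: an automorphism in $\operatorname{Aut}^0(Y)$ preserves the exceptional curves and hence descends to $X$. Equivalently, $H^0(Y,T_Y)$ injects into $H^0(X,T_X)$, since vector fields on $Y$ push forward to vector fields on $X$ away from finitely many points and then extend by normality. So we may set
\[
r_0'(X,m):=h^0(X,T_X)+1,
\]
which is independent of $\underline{q}$ (and in fact of $m$).

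Next, the genericity bookkeeping. Fix $r\ge r_0'$ and work in $X^{m}\times X^{r}$. For each very general $\underline{q}$ (outside a countable union of proper subvarieties of $X^m$), Theorem \ref{permute} applied to $Y_{\underline{q}}$ with empty closed subset gives a countable union $B_{\underline{q}}\subsetneq X^r$ of proper subvarieties. Outside $B_{\underline{q}}$, no nontrivial automorphism of $Y_{\underline{q}}$ maps $\{\widetilde{p_1},\dots,\widetilde{p_r}\}$ into itself, and in particular none fixes every $\widetilde{p_i}$. Here we use that the $p_i$ avoid the $q_j$, so $\widetilde{p_i}$ is a single point, and that very general points of $Y$ away from the exceptional curves correspond to very general points of $X$.

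We need the union of the $B_{\underline{q}}$ over $\underline{q}$ to be contained in a countable union of proper subvarieties of $X^{m+r}$. I would redo the argument of Theorem \ref{permute} in the family. Consider the relative automorphism scheme $\operatorname{Aut}(\mathcal{Y}/U)$ of the universal blow-up $\mathcal{Y}\to U\subset X^m$. It has countably many components, and all fibres have dimension at most $h^0(X,T_X)$. The incidence set
\[
\Gamma=\{(\underline{q},\underline{p},\varphi)\mid \varphi\ne\mathrm{id},\ \varphi(\widetilde{p_i})=\widetilde{p_i}\ \text{for all } i\}
\]
is then a countable union of closed sets. If some component of $\Gamma$ dominated $X^{m}\times X^{r}$, then over a very general $\underline{q}$ its fibre would dominate $X^r$, contradicting the dimension count in the proof of Theorem \ref{permute} with $r>h^0(X,T_X)$. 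The fixed-point case, $s'=r$, already forces $2r-h^0\le r\cdot\dim\operatorname{Fix}(\varphi)\le r$.

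The main obstacle is precisely this uniformity: the paper's very-general notion is a countable union of proper closed subsets in the total product space. The cleanest fix is to pass to the relative automorphism scheme, which is a countable union of quasi-projective schemes over $U$, and to run the dimension count fibrewise, noting that each step is constructible. Alternatively, one could avoid this by observing that an automorphism fixing many very general $\widetilde{p_i}$ lies in a countable union of proper closed subsets cut out uniformly.
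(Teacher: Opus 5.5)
Your proposal is correct and follows the paper's proof: the paper also takes $r_0'=h^0(X,T_X)+1$, uses $\dim\operatorname{Aut}^0(Y)\le\dim\operatorname{Aut}^0(X)$ (which you justify and the paper only asserts), and applies Theorem \ref{permute} to $Y=\operatorname{Bl}_{\underline{q}}X$. Your family argument via $\operatorname{Aut}(\mathcal{Y}/U)$ makes explicit the uniformity in $\underline{q}$ needed for joint very-generality in $X^{m+r}$, which the paper's one-line proof leaves implicit, and your fibrewise count $2r-h^0(X,T_X)\le r$ settles it correctly.
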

\begin{proof}
    Take $r_0'=h^0(X,T_X)+1=\dim \operatorname{Aut}^0(X)+1$. As $r_0'\geq \dim \operatorname{Aut}^0(Y)+1$, we are done by Theorem \ref{permute}.
\end{proof}
\section{Automorphism of very general blow up of ruled surface}
The goal of this section is to prove Proposition \ref{propo2}, which gives a weaker version of Theorem \ref{A} for ruled surfaces. We will use that to prove Theorem \ref{A} for all nonrational ruled surfaces in \S 5.
\begin{lemma}\label{Serre}
Fix a vector bundle $E$ on a smooth projective curve $C$. Then for all line bundles $L$ on $C$ of sufficiently large degree, we have
$h^1(C,E\otimes L)=0$.
\end{lemma}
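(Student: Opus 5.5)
The plan is to combine Serre duality with a positivity argument on the curve $C$. Write $g$ for the genus of $C$ and $\omega_C$ for its canonical bundle. Serre duality gives
\[
h^1(C,E\otimes L)=h^0(C,E^*\otimes\omega_C\otimes L^{-1}).
\]
So it suffices to show that the bundle $F:=E^*\otimes\omega_C$, which is fixed and independent of $L$, has no nonzero global sections after twisting by $L^{-1}$ once $\deg L$ is large.

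To see this, I would bound the degrees of line subbundles of $F$. A nonzero section $s\in H^0(C,F\otimes L^{-1})$ is the same as a nonzero map $L\to F$. Its saturation is a line subbundle $M\subseteq F$ with $\deg M\ge\deg L$, since the map $L\to M$ is injective and hence $M\cong L(D)$ for some effective divisor $D$. Thus it is enough to know that there is a constant $c(F)$ bounding the degree of every line subbundle of $F$. Then any $L$ with $\deg L>c(F)$ has $H^0(C,F\otimes L^{-1})=0$.

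The main point is therefore the existence of the bound $c(F)$, which I would prove by induction on the rank of $F$ using a filtration. For rank $1$, any line subbundle $M\subseteq F$ gives a nonzero map $M\to F$, which forces $\deg M\le\deg F$. For higher rank, choose an exact sequence $0\to F'\to F\to F''\to 0$ in which $F''$ is a line bundle; such a sequence exists, for example by dualizing a line subbundle of $F^*$, and after saturating one may assume $F'$ is a subbundle. Given a line subbundle $M\subseteq F$, either $M\subseteq F'$, so that $\deg M\le c(F')$, or the composite $M\to F''$ is nonzero, so that $\deg M\le\deg F''$. Taking $c(F)=\max(c(F'),\deg F'')$ completes the induction.

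An alternative, if one prefers to avoid the subbundle argument, is to pick a very ample $H$ on $C$ and use Serre vanishing to get $H^1(C,E\otimes H^k)=0$ for $k\gg0$. That approach only handles multiples of $H$, whereas the statement needs all $L$ of large degree, and bridging the two would require a further argument. So I would go with the duality and degree bound argument above, which is uniform in $L$ and has no real obstacle beyond the elementary bound on line subbundles.
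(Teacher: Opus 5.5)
Your proof is correct. It shares the first step with the paper: Serre duality reduces the claim to $\operatorname{Hom}(L,F)=0$ for the fixed bundle $F=\omega_C\otimes E^*$. The finish is different.

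The paper does not bound degrees of line subbundles by induction on rank. It observes that a nonzero map $L\to F$ is automatically injective, since $F$ is torsion-free. Such a map therefore induces an injection $H^0(C,L)\hookrightarrow H^0(C,F)$. Riemann--Roch gives $h^0(C,L)\ge\deg L+1-g$, which exceeds $h^0(C,F)$ once $\deg L\ge h^0(C,F)+g$. This gives a contradiction, so no nonzero map exists.

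This one-step argument already yields your key lemma directly: every line subsheaf of $F$ has degree at most $h^0(C,F)+g-1$. So the filtration $0\to F'\to F\to F''\to 0$ and the induction on rank are not actually needed.

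What your route buys is a bound $c(F)$ that makes no reference to cohomology. It is expressed through the degrees of the line-bundle quotients in a filtration of $F$, which is the standard argument that degrees of subsheaves of a fixed bundle are bounded above. The paper's route is shorter and uses only Riemann--Roch.

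Your remark about the Serre-vanishing alternative is also apt: it would only control powers of a fixed ample line bundle, not all line bundles of large degree.
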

\begin{proof}
Let $F=\omega_C\otimes E^*$. By Serre duality,
$$h^1(C,E\otimes L)=h^0(C,F\otimes L^{-1})
=\dim\operatorname{Hom}(L,F).$$
If $L$ has sufficiently large degree, then $h^0(C,L)>h^0(C,F)$, so there cannot be an injection of sheaves $L\to F$. Therefore,
$\operatorname{Hom}(L,F)=0$.
\end{proof}
\begin{lemma}\label{zls}
Let $C$ be a smooth projective curve, $\mathcal{E}$ a rank $2$ vector bundle on $C$. For a line bundle $L$ on $C$ and $0\neq s\in H^0(C,L^2)$ whose zero scheme $V(s)$ is nonempty and reduced, define
\[
Z_{L,s}=\{\eta\in\operatorname{Hom}(\mathcal{E},\mathcal{E}\otimes L)\mid \eta^2=\operatorname{id}\otimes s\},
\]
a Zariski closed subset of $\operatorname{Hom}(\mathcal{E},\mathcal{E}\otimes L)=H^0(C,\mathcal{E}nd(\mathcal{E})\otimes L)$.

Then the following hold:

\begin{enumerate}
\item For all $\eta\in Z_{L,s}$ and $z\in V(s)$, we have $\operatorname{rank}\eta(z)=1$.

\item $Z_{L,s}\subseteq H^0(C,\mathfrak{sl}(\mathcal{E})\otimes L)$.

\item There are positive integers $c_0=c_0(C,\mathcal{E})$ and $c_0'=c_0'(C,\mathcal{E})$ such that if $\deg L\ge c_0'$, then
\[
\dim Z_{L,s}\le \deg L+c_0.
\]
\end{enumerate}
\end{lemma}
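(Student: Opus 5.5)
Parts (1) and (2) follow from pointwise linear algebra on the fibres, and part (3) from an incidence dimension count.

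\textbf{Part (1).} Fix $z\in V(s)$. Trivialize $L$ near $z$, so that $\eta(z)\in\operatorname{End}(\mathcal{E}(z))$ is defined up to a nonzero scalar. The relation $\eta^2=\operatorname{id}\otimes s$ evaluated at $z$ gives $\eta(z)^2=0$, so $\eta(z)$ is nilpotent and hence has rank $\le 1$.

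To exclude rank $0$, suppose $\eta(z)=0$ and choose a local coordinate $t$ at $z$. Then $\eta=t\eta_1$ locally, so $\eta^2=t^2\eta_1^2$ vanishes to order $\ge 2$ at $z$. On the other hand $s$ vanishes to order exactly $1$ at $z$, because $V(s)$ is reduced. This is a contradiction, so $\operatorname{rank}\eta(z)=1$.

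\textbf{Part (2).} By Cayley--Hamilton for $2\times 2$ matrices with entries in a local ring, $\eta^2-(\operatorname{tr}\eta)\eta+(\det\eta)\operatorname{id}=0$, where $\operatorname{tr}\eta\in H^0(L)$ and $\det\eta\in H^0(L^2)$. Substituting $\eta^2=s\cdot\operatorname{id}$ gives $(\operatorname{tr}\eta)\eta=(s+\det\eta)\operatorname{id}$.

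Suppose $\operatorname{tr}\eta\neq 0$. Then on a dense open set $\eta$ is a scalar multiple of the identity, say $\eta=f\cdot\operatorname{id}$ with $f$ a rational section of $L$. In fact $f=\tfrac12\operatorname{tr}\eta$ is regular. But then $\eta(z)=f(z)\operatorname{id}$ has rank $0$ or $2$ at $z\in V(s)$, contradicting part (1). Since $V(s)\neq\varnothing$, we conclude $\operatorname{tr}\eta=0$, i.e.\ $\eta\in H^0(C,\mathfrak{sl}(\mathcal{E})\otimes L)$.

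\textbf{Part (3).} This is the main step; we bound $\dim Z_{L,s}$ by a fibration over the possible kernel data. By part (1), each $\eta\in Z_{L,s}$ determines, for every $z\in V(s)$, a line $\ell_z=\ker\eta(z)$. Note that $\operatorname{im}\eta(z)=\ker\eta(z)$, since $\eta(z)$ is nilpotent of rank $1$.

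Write $d=\deg L$, so $\#V(s)=2d$. Consider the map
\[
Z_{L,s}\to\prod_{z\in V(s)}\mathbb{P}(\mathcal{E}(z))\cong(\mathbb{P}^1)^{2d}.
\]
Its target has dimension $2d$, which is too much. The fibres, however, are small: $\eta$ is a section of $\mathfrak{sl}(\mathcal{E})\otimes L$, a rank $3$ bundle, and the condition "$\eta(z)$ is nilpotent with kernel $\ell_z$" cuts out a $1$-dimensional subspace of the $3$-dimensional fibre. So the fibre over $(\ell_z)$ lies in $H^0$ of an elementary-transformation subsheaf $\mathcal{G}\subset\mathfrak{sl}(\mathcal{E})\otimes L$ of colength $2\cdot 2d$. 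We get $\deg\mathcal{G}=3d+\deg\mathfrak{sl}(\mathcal{E})-4d=-d+O(1)$, so $h^0(\mathcal{G})$ is bounded by a constant. This naive count gives only $\dim Z_{L,s}\le 2d+c$, not good enough.

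To improve it, I would instead show that the map $\eta\mapsto(\ell_z)$ lands in a subvariety of dimension $\le d+c$. The key observation is that a section $\eta$ with $\eta^2=s$ corresponds to a double cover: $\eta$ defines an $\mathcal{O}_C[\sqrt{s}]$-module structure on $\mathcal{E}$. Let $\pi:\tilde C\to C$ be the double cover branched along $V(s)$ determined by $L$ and $s$. Then $\mathcal{E}\cong\pi_*M$ for a line bundle $M$ on $\tilde C$ with $\deg M$ fixed in terms of $\deg\mathcal{E}$ and $d$. The genus of $\tilde C$ is $2g-1+d$, so such pairs $(M,\text{iso})$ move in a family of dimension $\le g(\tilde C)+\dim\operatorname{Aut}$-type data $\le d+c_0$, using $\dim\operatorname{Hom}(\pi_*M,\mathcal{E})$ bounded via Lemma \ref{Serre} and stability-type considerations once $d\ge c_0'$.

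The main obstacle is making this parametrization precise: bounding $\dim\operatorname{Isom}(\pi_*M,\mathcal{E})$ uniformly in $M$, and bounding the dimension of the locus in $\operatorname{Pic}(\tilde C)$ of those $M$ with $\pi_*M\cong\mathcal{E}$. For the latter, I would bound the tangent space by $H^1$-type terms. As a fallback, I would combine the elementary-transformation fibre count above with the observation that the kernel lines must vary in a family governed by Lemma \ref{maruyama}.
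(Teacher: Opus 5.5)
Your parts (1) and (2) are fine. Part (1) is the paper's order-of-vanishing argument, and your Cayley--Hamilton argument for (2) is a global version of the paper's local matrix computation. You are also right that the naive count over $\prod_{z\in V(s)}\mathbb{P}(\mathcal{E}(z))$ is too weak.

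The gap is in (3): you never prove the bound. You assert that the pairs $(M,\phi)$ move in a family of dimension $\le g(\tilde C)+$ (automorphism data). You then call the two estimates needed for this ``the main obstacle'' and reach for tools that do not bear on them: Lemma~\ref{Serre}, stability, tangent-space $H^1$ bounds, Lemma~\ref{maruyama}. In fact both estimates are immediate once you restrict to the line bundles that actually occur. No hypothesis on $\deg L$ is needed.

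Let $\pi:\tilde C=\operatorname{Spec}_C(\mathcal{O}_C\oplus L^{-1})\to C$ be the double cover defined by $s$. It is smooth because $V(s)$ is reduced, irreducible because $V(s)\neq\varnothing$, and has genus $g(\tilde C)=2g-1+\deg L$.

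An $\eta\in Z_{L,s}$ is the same thing as an $\mathcal{O}_C\oplus L^{-1}$-module structure on $\mathcal{E}$. So $\mathcal{E}=\pi_*\mathcal{M}_\eta$, where $\mathcal{M}_\eta$ is torsion-free of rank one, hence a line bundle. Comparing Euler characteristics gives $\deg\mathcal{M}_\eta=\deg\mathcal{E}+\deg L$.

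The universal $\eta$ over $C\times Z_{L,s}$ gives a family of such line bundles parametrized by $Z_{L,s}$, hence a morphism $Z_{L,s}\to\operatorname{Pic}^{\deg\mathcal{E}+\deg L}(\tilde C)$. Its fibres are exactly conjugation orbits: $\mathcal{M}_\eta\cong\mathcal{M}_{\eta'}$ if and only if $\eta'=(g\otimes\operatorname{id}_L)\circ\eta\circ g^{-1}$ for some $g\in\operatorname{Aut}(\mathcal{E})$. Scalars stabilize $\eta$, so every fibre has dimension $\le h^0(C,\mathcal{E}nd(\mathcal{E}))-1$. Hence
\[
\dim Z_{L,s}\le g(\tilde C)+h^0(C,\mathcal{E}nd(\mathcal{E}))-1=\deg L+2g-2+h^0(C,\mathcal{E}nd(\mathcal{E})).
\]
Put differently, there is nothing to control on $\operatorname{Pic}(\tilde C)$ beyond its dimension. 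Likewise $\operatorname{Isom}(\pi_*M,\mathcal{E})$, when nonempty, is an $\operatorname{Aut}(\mathcal{E})$-torsor, so its dimension does not depend on $M$. You do not need to bound $\operatorname{Hom}(\pi_*M,\mathcal{E})$ for arbitrary $M$.

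Completed this way, your route differs genuinely from the paper's. The paper argues infinitesimally:
\begin{itemize}
\item It views $Z_{L,s}$ as the fibre over $s$ of the squaring map $\psi:H^0(C,\mathfrak{sl}(\mathcal{E})\otimes L)\to H^0(C,L^2)$.
\item It identifies $(d\psi)_\eta$ with $H^0$ of the surjection $\widehat\eta\otimes\operatorname{id}$.
\item It bounds the corank $h^1(C,\ker\widehat\eta\otimes L)$ uniformly, using the Euler sequence on $\mathbb{P}(\mathfrak{sl}(\mathcal{E}))$ and global generation of $\Omega_q(2)\otimes q^*N$.
\item Riemann--Roch with $h^1$-vanishing then bounds every Zariski tangent space; this vanishing is where $\deg L\ge c_0'$ enters.
\end{itemize}
That argument stays on $C$ and controls $T_\eta Z_{L,s}$ at each point. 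The spectral-cover argument is more conceptual: it describes $Z_{L,s}$ globally as a union of $\operatorname{Aut}(\mathcal{E})$-orbits over part of a Jacobian, gives an explicit constant, and removes the hypothesis $\deg L\ge c_0'$ altogether.
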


\begin{proof}
The statements (1) and (2) can be checked locally near a point $z\in V(s)$. So assume $\mathcal{E}$ and $L$ are trivial, and $s\in\mathcal{O}_{z}$ a uniformizing parameter. Write $\eta$ as a matrix
\[
\begin{bmatrix}
f_1 & f_2\\
f_3 & f_4
\end{bmatrix},
\]
with $f_i\in\mathcal{O}_z$.

We have $\eta^2=s\cdot\operatorname{id}$. So,
\[
\begin{bmatrix}
f_1^2+f_2f_3 & f_2(f_1+f_4)\\
f_3(f_1+f_4) & f_4^2+f_2f_3
\end{bmatrix}
=
\begin{bmatrix}
s & 0\\
0 & s
\end{bmatrix}.
\]

If $f_1+f_4\neq 0$, then $f_2=f_3=0$ and $f_1^2=s$, which is impossible for $f_1\in\mathcal{O}_z$. Thus $f_1+f_4=0$, proving (2).

If $\eta(z)=0$, then $s\mid f_i$ for all $i$, so $s^2\mid f_1^2+f_2f_3=s$, a contradiction. So $\eta(z)\neq 0$. As $\eta(z)^2=0$, we get (1).

Now we prove (3). Let $L$ be any line bundle on $C$. Note that if $\eta\in H^0(C,\mathfrak{sl}(\mathcal{E})\otimes L)\subseteq\operatorname{Hom}(\mathcal{E},\mathcal{E}\otimes L)$, then
\[
\eta^2\in\operatorname{Hom}(\mathcal{E},\mathcal{E}\otimes L^2)
=H^0(C,\mathcal{E}nd(\mathcal{E})\otimes L^2)
\]
lies in the subspace $H^0(C,L^2)$. This can be proved by a local computation similar to the proof of (1) and (2). Here we regard $L^2$ as a subbundle of $\mathcal{E}nd(\mathcal{E})\otimes L^2$ via the nowhere vanishing section of $\mathcal{E}nd(\mathcal{E})$ given by the identity endomorphism.

So, we get a morphism
\[
\psi:H^0(C,\mathfrak{sl}(\mathcal{E})\otimes L)\longrightarrow H^0(C,L^2),
\qquad
\eta\longmapsto \eta^2,
\]
and $Z_{L,s}=\psi^{-1}(s)$ for $0\neq s\in H^0(C,L^2)$ whose zero scheme is nonempty reduced.

The natural isomorphism $\mathfrak{sl}(\mathcal{E})\cong \mathfrak{sl}(\mathcal{E})^*$ induces a natural isomorphism
\[
\mathfrak{sl}(\mathcal{E})\otimes L\cong \mathcal{H}om(\mathfrak{sl}(\mathcal{E}),L).
\]
So, any $\eta\in H^0(C,\mathfrak{sl}(\mathcal{E})\otimes L)$ induces a map
\[
\widehat{\eta}:\mathfrak{sl}(\mathcal{E})\longrightarrow L.
\]Let $M_\eta:=\ker\widehat{\eta}$.

\begin{claim}\label{etahat}
Let $\eta\in Z_{L,s}$. Then the following hold:

\begin{enumerate}
\item[(a)] $\widehat{\eta}$ is surjective.

\item[(b)] $(d\psi)_\eta$ is same as the map on $H^0$ induced by $\widehat{\eta}\otimes\operatorname{id}:\mathfrak{sl}(\mathcal{E})\otimes L\longrightarrow L^2.$

\item[(c)]
$\operatorname{rank}(d\psi)_\eta
=
h^0(C,L^2)-h^1(C,M_\eta\otimes L), $ if $h^1(C,\mathfrak{sl}(\mathcal{E})\otimes L)=0$,
\end{enumerate}
\end{claim}
\begin{proof}
$(a)$ This can be checked locally near any point $z\in C$.
So assume $\mathcal{E}$ and $L$ are trivial.
 $\eta$ is given by a matrix
\[
\begin{pmatrix}
f_1 & f_2\\
f_3 & -f_1
\end{pmatrix},
\]
where $f_i\in\mathcal{O}_{z}$,
and $
\mathfrak{sl}(\mathcal{E})
\xlongrightarrow{\widehat{\eta}}
L$
is given by
\[
\mathcal{O}_{z}^{\,3}
\longrightarrow
\mathcal{O}_{z},
\]
\[
(g_1,g_2,g_3)
\longmapsto
2g_1f_1+g_2f_3+g_3f_2.
\]

Since $\eta^2=\operatorname{id}\otimes s$, We have $\operatorname{ord}_z(f_1^2+f_2f_3)\le 1.$
So some $f_i$ does not vanish at $z$. Hence $\widehat{\eta}$
is surjective at $z$.

$(b)$ Note that
\[
(d\psi)_\eta(\eta')
=
\eta\eta'
+
\eta'\eta,
\]
for $\eta'\in H^0(C,\mathfrak{sl}(\mathcal{E})\otimes L).$

Here $\eta\eta',\ \eta'\eta$ are elements of 
$H^0\!\left(C,\mathcal{E}nd(\mathcal{E})\otimes L^2\right),$
and their sum lies in $H^0(C,L^2)
\subset
H^0\!\left(C,\mathcal{E}nd(\mathcal{E})\otimes L^2\right).$

So we need to show that
\[
(\widehat{\eta}\otimes\operatorname{id})(\eta')
=
\eta\eta'
+
\eta'\eta.
\]

This can be checked by a straightforward local computation.

\medskip

\noindent
$(c)$ Follows immediately from $(b)$ and the short exact sequence
\[
0
\longrightarrow
M_\eta\otimes L
\longrightarrow
\mathfrak{sl}(\mathcal{E})\otimes L
\longrightarrow
L^2
\longrightarrow
0.
\]
\end{proof}
Now we finish the proof of the Lemma. Let $\mathcal{F}=\mathfrak{sl}(\mathcal{E})$, and $q:\mathbb{P}_C(\mathcal{F})\to C$ be the projection. There is a line bundle $N$ on $C$ such that $q_*\Omega_q(2)\otimes N$ is globally generated, hence $q^*q_*\Omega_q(2)\otimes q^*N$ is globally generated. Note that the restriction of $\Omega_q(2)$ to each fibre of $q$ is globally generated, as $\Omega_{\mathbb{P}^2}(2)$ is globally generated. Therefore, by cohomology and base change, the natural map $q^*q_*\Omega_q(2)\longrightarrow \Omega_q(2)$
is surjective. Consequently, $\Omega_q(2)\otimes q^*N$ is globally generated.

By Lemma \ref{Serre}, we can choose $c_0'\in\mathbb{N}$ such that the following hold:

\begin{enumerate}
\item For any line bundle $L$ on $C$ of degree $\ge c_0'$, we have
$h^1(C,\mathcal{F}\otimes L)=h^1(C,L^2)=0$.

\item For any line bundle $N''$ of degree $\ge c_0'+\deg N+\deg\mathcal{F},$
we have $h^1(C,N'')=0$.
\end{enumerate}

Suppose $\eta\in Z_{L,s}$, where $\deg L\ge c_0'$. The surjection
$\widehat{\eta}$ gives a section $\sigma$ of $q$ with $\sigma^*\mathcal{O}_{\mathbb{P}(\mathcal{F})}(1)\cong L$. The Euler exact sequence
gives an exact sequence on $\mathbb{P}(\mathcal{F})$:
\[
0\longrightarrow \Omega_q(2)\longrightarrow q^*(\mathcal{F})(1)\longrightarrow \mathcal{O}_{\mathbb{P}(\mathcal{F})}(2)\longrightarrow 0.
\]

Pulling back by $\sigma$, we get the following short exact sequence on $C$:
\[
0\longrightarrow \sigma^*\Omega_q(2)\longrightarrow
\mathcal{F}\otimes L
\xrightarrow{\ \widehat{\eta}\otimes\operatorname{id}\ }
L^2
\longrightarrow 0.
\]

This shows $M_\eta\otimes L\cong\sigma^*\Omega_q(2)$. Hence $M_\eta\otimes L\otimes N$ is globally generated. Therefore a general section of $M_\eta\otimes L\otimes N$ is nowhere vanishing. So we have a short exact sequence of vector bundles
\[
0\longrightarrow \mathcal{O}_C
\longrightarrow
M_\eta\otimes L\otimes N
\longrightarrow
N'
\longrightarrow 0,
\]
hence a short exact sequence of vector bundles
\begin{equation}\label{N''}
0\longrightarrow N^{-1}
\longrightarrow
M_\eta\otimes L
\longrightarrow
N''
\longrightarrow 0.
\end{equation}

Note that $N''$ is a line bundle of degree
\begin{align*}
\deg(M_\eta\otimes L)+\deg N
&=
\deg(\mathcal{F}\otimes L)-2\deg L+\deg N\\
&=
\deg\mathcal{F}+\deg L+\deg N\\
&\ge
c_0'+\deg N+\deg\mathcal{F},
\end{align*}
so $h^1(C,N'')=0$. Hence, \eqref{N''} gives
$h^1(C,M_\eta\otimes L)\le h^1(C,N^{-1})$. By Claim \ref{etahat}(c),
\[
\operatorname{rank}(d\psi)_\eta
\ge
h^0(C,L^2)-h^1(C,N^{-1}).
\]

Finally,
\begin{align*}
\dim_{\eta} Z_{L,s}
&\le
\dim T_\eta(Z_{L,s})
\le \dim\ker\!\left(d\psi\right)_\eta\\
&= h^0(C,\mathcal{F}\otimes L)-\operatorname{rank}(d\psi)_\eta\\
&\le
h^0(C,\mathcal{F}\otimes L)
-
h^0(C,L^2)+h^1(C,N^{-1})\\
&=
\chi(C,\mathcal{F}\otimes L)
-
\chi(C,L^2)
+
h^1(C,N^{-1})\\
&=
\deg\mathcal{F}
+3\deg L
+3(1-g)
-
2\deg L
-(1-g)
+
h^1(C,N^{-1})\\
&=
\deg L
+\deg\mathcal{F}
+h^1(C,N^{-1})
+2(1-g).
\end{align*}

Hence one may take
\[
c_0
=
\deg\mathcal{F}
+h^1(C,N^{-1})
+2(1-g).
\]
\end{proof}

\begin{proposition}\label{Propo1}
Fix a minimal ruled surface $\pi:X\to C$ over a smooth projective curve $C$, and an integer $d\ge0$. Then there is a positive integer $r_0=r_0(X,d)$ such that the following holds:

Fix $r\ge r_0$, distinct points $z_1,\ldots,z_{d+r}\in C$, and
$p_i\in\pi^{-1}(z_i)$ for $1\le i\le d$. Let
$(p_{d+1},\ldots,p_{d+r})\in\prod_{d<i\le r}\pi^{-1}(z_i)$
be general, and
$Y=\operatorname{Bl}_{p_1,\ldots,p_{d+r}}X$, with exceptional divisors $E_i$ over $p_i$ for $1\leq i\leq d+r.$
Then any involution of $Y$ over $C$ preserves some $E_i$.
\end{proposition}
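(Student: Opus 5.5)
Write $X=\mathbb{P}_C(\mathcal{E})$ for a rank $2$ bundle $\mathcal{E}$, and put $n=d+r$. The plan is to turn an involution $\sigma$ of $Y$ over $C$ that preserves no $E_i$ into a section $\eta\in Z_{L,s}$ as in Lemma \ref{zls}. Then I use the dimension bound of Lemma \ref{zls}(3) to show that only a small family of point configurations can occur.

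\emph{Step 1: from $\sigma$ to a birational involution of $X$.} Since $\sigma$ commutes with the projection to $C$, it preserves each fibre $Y_{z_i}=E_i\cup\widetilde{F_i}$. This fibre consists of two $(-1)$-curves. If $\sigma(E_i)\neq E_i$, then $\sigma$ swaps $E_i$ and $\widetilde{F_i}$. Assuming this for all $i$, the composite $h'\circ\sigma$ contracts exactly the $E_i$. Hence it factors through $h$, giving an isomorphism $\phi\colon X\to X'$ over $C$ with $\phi(p_i)=q_i$, where $(X',\underline{q})=\operatorname{Elm}(X,\underline{p})$.

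By Lemma \ref{maruyama}, $X'=\mathbb{P}(\mathcal{F})$ with $\mathcal{F}=\ker\tau\subset\mathcal{E}$. The isomorphism $\mathbb{P}(\mathcal{E})\cong\mathbb{P}(\mathcal{F})$ over $C$ comes from $\mathcal{F}\cong\mathcal{E}\otimes L^{-1}$ for some line bundle $L$. Composing with $\alpha$, the birational involution $\bar\sigma=h\circ\sigma\circ h^{-1}$ of $X$ is induced by some $\eta\in\operatorname{Hom}(\mathcal{E},\mathcal{E}\otimes L)$. This $\eta$ is injective with cokernel $\bigoplus_i k(z_i)$.

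Since $\bar\sigma^2=\mathrm{id}$, the map $\eta^2\colon\mathcal{E}\to\mathcal{E}\otimes L^2$ induces the identity on $\mathbb{P}(\mathcal{E})$, so $\eta^2=\mathrm{id}\otimes s$ for a section $s\in H^0(C,L^2)$. Comparing determinants, $\det\eta^2=s^2$ while $\det\eta$ vanishes simply exactly at the $z_i$. So $V(s)=\{z_1,\dots,z_n\}$ is reduced and nonempty, and hence $\eta\in Z_{L,s}$.

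Two consequences follow. First, $2\deg L=n$ and $L^2\cong\mathcal{O}_C(\sum z_i)$, so once the $z_i$ are fixed there are only finitely many ($2^{2g}$) possible $L$. Second, $s$ is determined up to a scalar, and rescaling $s$ by $\lambda$ rescales $\eta$ by $\sqrt\lambda$.

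\emph{Step 2: $\eta$ determines the points.} By Lemma \ref{zls}(1), $\eta(z_i)$ has rank $1$. Tracking the construction of Lemma \ref{maruyama}, $p_i\in\mathbb{P}(\mathcal{E}(z_i))$ is the point corresponding to $\ker\eta(z_i)$ (equivalently, its image, since $\eta(z_i)^2=0$). This gives a morphism
\[
\Theta_L\colon Z_{L,s}\longrightarrow \prod_{d<i\le n}\pi^{-1}(z_i),\qquad \eta\longmapsto\bigl(\mathbb{P}(\ker\eta(z_i))\bigr)_{i>d}.
\]
Any configuration admitting a bad involution lies in $\bigcup_L\Theta_L(Z_{L,s})$.

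\emph{Step 3: dimension count.} Choose $r_0$ large enough that $\deg L=n/2\ge c_0'(C,\mathcal{E})$. Then Lemma \ref{zls}(3) gives
\[
\dim\Theta_L(Z_{L,s})\le \tfrac{d+r}{2}+c_0 .
\]
Enlarging $r_0$ so that $\tfrac{d+r}{2}+c_0<r$, each image is a constructible subset of the $r$-dimensional target of strictly smaller dimension. The union over the finitely many $L$ is therefore contained in a proper closed subset, and a general $(p_{d+1},\dots,p_n)$ avoids it. Hence no involution of $Y$ over $C$ can swap every $E_i$ with $\widetilde{F_i}$, which proves the proposition.

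The main obstacle is Step 1: making precise that $\bar\sigma$ is given by a single twisted endomorphism $\eta$ with $\eta^2=\mathrm{id}\otimes s$ on the nose, that $V(s)$ is exactly the reduced divisor $\sum z_i$, and that the kernel convention in Lemma \ref{maruyama} matches $p_i$. I would handle this by working locally at each $z_i$ with the elementary modification $\mathcal{F}\subset\mathcal{E}$ and composing the isomorphism $\mathcal{E}\otimes L^{-1}\cong\mathcal{F}$ with $\alpha$.
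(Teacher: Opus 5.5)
Your proposal is correct and follows the paper's proof essentially step by step. You realize $h\sigma h^{-1}$ as $\eta^*$ for a twisted endomorphism $\eta$ with cokernel $\bigoplus_i k(z_i)$ via Lemma \ref{maruyama}, put it in $Z_{L,s}$, read off the $p_i$ from $\ker\eta(z_i)$, and bound the image by Lemma \ref{zls}(3) over the finitely many $L$ with $L^2\cong\mathcal{O}_C(\sum_i z_i)$. The only (harmless) variations are these: you obtain $V(s)=\sum_i z_i$ reduced from $s^2=(\det\eta)^2$, whereas the paper fixes $s$ first, writes $\eta^2=\lambda\,\mathrm{id}\otimes s$ with $\lambda\in K(C)$, and uses $\eta^2(z_i)=0$ (from $\overline{\varphi}(p_i)=q_i$) to force $\lambda\in\mathbb{C}^\times$; and your count with $\deg L=(d+r)/2$ against the $r$-dimensional space of general points is a consistently indexed version of the paper's estimate.
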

\begin{proof}
   Let $X=\mathbb{P}_C(\mathcal{E})$, where $\mathcal{E}$ is a rank $2$ vector bundle on $C$. Let $c_0(C,\mathcal{E})$ and $c_0'(C,\mathcal{E})$ be as in Lemma \ref{zls}, and let
\[
r_0=2\bigl(c_0(C,\mathcal{E})+c_0'(C,\mathcal{E})+d\bigr).
\]
We show it works. 

Let $r\ge r_0$ and $z_1,\ldots,z_{d+r}\in C$ be distinct points, $p_i\in\pi^{-1}(z_i)$ for $1\le i\le d+r$, $Y=\operatorname{Bl}_{\underline{p}} X$, and $h:Y\to X$ the blow-up map, with $E_i$ the exceptional divisors over $p_i$.
Suppose $\varphi$ is an involution of $Y$ over $C$ with $\varphi(E_i)=:E_i'\ne E_i$ for all $i$.

Let $Y\xrightarrow{h'}X'$ be the contraction of all the $E_i'$'s, and let $p_i'=h'(E_i)$. So, $\operatorname{Elm}(X,\underline{p})\cong (X',\underline{p'})$,
and $\varphi$ induces an isomorphism $(X,\underline{p})\xrightarrow{\;\overline{\varphi}\;}(X',\underline{p'})$
over $C$.

Let $\mathcal{E}\xrightarrow{\tau}\bigoplus_i k(z_i)$
be a surjection such that $p_i\in\mathbb{P}(\mathcal{E}(z_i))$ corresponds to the hyperplane $\ker\tau(z_i)$, and let $\mathcal{F}$, $\alpha$, and $\underline{q}$ be as in the Lemma \ref{maruyama}. By Lemma \ref{maruyama}, we can identify $(X',\underline{p'})=(\mathbb{P}(\mathcal{F}),\underline{q})$
over $C$. So, we have an isomorphism $\overline{\varphi}:\mathbb{P}(\mathcal{E})\longrightarrow \mathbb{P}(\mathcal{F})$
over $C$ with $\overline{\varphi}(p_i)=q_i$. By \cite[Exercise II.7.10(d)]{hartshorne2013algebraic}, $\overline{\varphi}$ is induced by an isomorphism of vector bundles $\mathcal{E}\otimes L^{-1}\xrightarrow{\psi}\mathcal{F}$,
where $L$ is a line bundle on $C$. As $\overline{\varphi}(p_i)=q_i$, we have
\[
\psi(z_i)\bigl(\ker\tau(z_i)\otimes L^{-1}(z_i)\bigr)=\ker\alpha(z_i),
\]
hence
\[
\ker(\alpha\psi)(z_i)
=
\ker\tau(z_i)\otimes L^{-1}(z_i)
=
\operatorname{im}\alpha\psi(z_i)\otimes L^{-1}(z_i),
\]
where the last equality follows from the short exact sequence
\begin{equation}\label{alpha psi}
0\longrightarrow
\mathcal{E}\otimes L^{-1}
\xrightarrow{\ \alpha\psi\ }
\mathcal{E}
\xrightarrow{\ \tau\ }
\bigoplus_i k(z_i)
\longrightarrow0.
\end{equation}

So, if $(\alpha\psi)^2:\mathcal{E}\otimes L^{-2}\to\mathcal{E}$ is the induced map, then $(\alpha\psi)^2(z_i)=0$
for all $i$.

Note that by \eqref{alpha psi}, $L^{2}\cong\mathcal{O}_C\!\left(\sum_i z_i\right).$
Let $s$ be a section of $L^2$ with zero divisor $\sum_i z_i$. Let
$\eta:\mathcal{E}\to\mathcal{E}\otimes L$ and
$\eta^2:\mathcal{E}\to\mathcal{E}\otimes L^2$
be the maps induced by $\alpha\psi$ and $(\alpha\psi)^2$, respectively.
So, $\eta^2(z_i)=0$ for all $i$.

\begin{claim}

There is $\lambda\in\mathbb{C}^{\times}$ such that
$\eta^2=\lambda\,\mathrm{id}\otimes s$.
    
\end{claim}

\begin{proof}
We have a commutative diagram by Lemma \ref{maruyama}:
\[
\begin{tikzcd}[column sep=large,row sep=large]
Y \arrow[r,"\varphi"] \arrow[d,"h"'] &
Y \arrow[d,"h'"] \arrow[dr,"h"] & \\
X &
X' \arrow[l,"\psi^*"'] &
X \arrow[l,densely dotted,"\alpha^*"']
\end{tikzcd}
\]

Hence
\[
h\varphi^{-1}h^{-1}
=
\psi^*\alpha^*
=
(\alpha\gamma)^*
\] in $\operatorname{Bir}(X)$. As $\varphi^2=\mathrm{id}$, we get $\bigl((\alpha\psi)^*\bigr)^2=\mathrm{id}$ in $\operatorname{Bir}(X)$,
so $\bigl((\alpha\psi)^2\bigr)^*=\mathrm{id}.$ Hence $(\eta^2)^*=\mathrm{id}$. Therefore,
\[
\eta^2=\lambda\,\mathrm{id}\otimes s
\]
for some $\lambda\in K(C)$.

Since the points $z_i$'s are distinct, $\operatorname{id}\otimes s \in H^0(C,\mathcal{E}nd(\mathcal{E})\otimes L^2)$ vanishes exactly at the points $z_i$ and has simple zeroes there. As $\eta^2(z_i)=0$ for all $i$, we get $\lambda\in H^0(C,\mathcal{O}_C)=\mathbb{C}$. Finally, $\lambda\neq 0$, as $\eta^2(z)\neq 0$ for $z\notin\{z_1,\ldots,z_r\}$, $\eta(z)$ being an isomorphism.
\end{proof}

Let $\sqrt{\lambda}$ be a square root of $\lambda$, so $\frac{1}{\sqrt{\lambda}}\eta\in Z_{L,s}$. By Lemma \ref{zls}, $\operatorname{rank}\eta(z_i)=1$ for all $i$, hence $\ker\eta(z_i)=\operatorname{im}(\alpha\psi)(z_i)=\ker\tau(z_i)$, that corresponds to the $p_i\in\mathbb{P}(\mathcal{E}(z_i))$. Consider the map
\[
\kappa_{L,s}: Z_{L,s}\longrightarrow \prod_{i=1}^{d+r}\mathbb{P}(\mathcal{E}(z_i)),
\]
given by
\[
\eta'\longmapsto \bigl(\ker\eta'(z_i)\bigr)_i.
\]

Here $\ker\eta'(z_i)$ is a hyperplane in $\mathcal{E}(z_i)$ Lemma \ref{zls}, so corresponds to a point in $\mathbb{P}(\mathcal{E}(z_i))$. Note that
\[
\kappa_{L,s}\!\left(\frac{1}{\sqrt{\lambda}}\eta\right)
=(p_1,\ldots,p_{d+r}).
\]
So,
\begin{equation}\label{im k}
(p_1,\ldots,p_{d+r})\in \overline{\operatorname{im}(\kappa_{L,s})}.
\end{equation}

Note that upto isomorphism there are only finitely many pairs $(L,[s])$
of $[s]\in \mathbb{P}\!\left(H^0(C,L^2)^*\right)$
with $[V(s)]=\sum_i z_i,$
and \(\operatorname{im}(\kappa_{L,s})\) depends only on \((L,[s])\). If for
\[
(p_{d+1},\ldots,p_{d+r})
\in
\prod_{d<i\le r}\pi^{-1}(z_i)
\]
general, \eqref{im k} is satisfied for some \((L,s)\), then we must have
\[
r-d
\le
\dim\overline{\operatorname{im}(\kappa_{L,s})}
\]
for some \((L,s)\).

As
\[
\deg L=\frac r2>c_0'(C,\mathcal{E}),
\]
we have by Lemma \ref{zls},
\[
\dim\overline{\operatorname{im}(\kappa_{L,s})}
\le
\dim Z_{L,s}
\le
\frac r2+c_0(C,\mathcal{E}).
\]

So, $r-d
\le
\frac r2+c_0(C,\mathcal{E}),$
hence $r
\le
2d+2c_0(C,\mathcal{E}),$ a
contradiction to the choice of \(r\).
\end{proof}
\begin{proposition}\label{propo2}
Fix a minimal ruled surface $\pi:X_0\to C$ over a smooth projective curve $C$, an integer $s\ge0$, and points $q_1,\ldots,q_s\in X_0$ in distinct fibres of $\pi$. Let $X=\operatorname{Bl}_{\underline{q}}X_0$. Then for all $r\gg0$, the blow-up of $X$ at $r$ very general points has no nontrivial automorphism over $C$.
\end{proposition}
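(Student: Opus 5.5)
We argue by splitting an automorphism according to how it acts on the reducible fibres, and reduce either to Theorem \ref{permute} or to Proposition \ref{Propo1}. Let $p_1,\ldots,p_r\in X$ be very general, so that $z_i:=\pi(p_i)$ are distinct and distinct from $\pi(q_1),\ldots,\pi(q_s)$. Put $W=\operatorname{Bl}_{\underline{p}}X=\operatorname{Bl}_{\underline{q},\underline{p}}X_0$, with exceptional curves $E_{q_j}$ and $E_{p_i}$. We aim for the bound
\[
r\ \ge\ \max_{0\le d\le s} r_0(X_0,d)\;+\;\dim\operatorname{Aut}^0(X)+1,
\]
with $r_0(X_0,d)$ as in Proposition \ref{Propo1}.

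Over each point $\pi(q_j)$ or $z_i$, the fibre of $W\to C$ is the union of two $(-1)$-curves: the exceptional curve and the strict transform of the fibre. Since a fibre of $X_0$ can contain at most one blown-up point, these are the only reducible fibres. Any automorphism $\varphi$ of $W$ over $C$ maps each fibre to itself, so it either preserves the two components of each reducible fibre or swaps them. This gives a group homomorphism $\operatorname{Aut}_C(W)\to(\mathbb{Z}/2)^{s+r}$.

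Take $\varphi$ in the kernel of this map. Then $\varphi$ preserves each $E_{p_i}$, so it descends to an automorphism of $X$ fixing every $p_i$. By Theorem \ref{permute}, applied to $X$ with the closed set $\varnothing$, this descended automorphism is trivial once $r\ge\dim\operatorname{Aut}^0(X)+1$. Hence $\varphi$ is trivial, and $\operatorname{Aut}_C(W)$ injects into an elementary abelian $2$-group. In particular every nontrivial $\varphi$ is an involution.

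Now let $\varphi$ be a nontrivial involution. Let $T_p\subseteq[r]$ and $T_q\subseteq[s]$ be the indices whose exceptional curves $\varphi$ preserves. There are two cases.

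\textbf{Case $|T_p|\ge\dim\operatorname{Aut}^0(X)+1$.} Contract the $E_{p_i}$ with $i\in T_p$ and also all $E_{p_i}$ with $i\notin T_p$; the latter are swapped with fibre strict transforms, so this is not a direct descent. Instead we argue as follows. Contracting only the $E_{p_i}$ with $i\in T_p$ shows that $\varphi$ induces an automorphism of $\operatorname{Bl}_{p_i:i\notin T_p}X$ fixing the very general points $p_i$, $i\in T_p$. We then apply Theorem \ref{permute} to this variety. Since there are only finitely many subsets $T_p$ and finitely many possibilities for $T_q$, the very-generality conditions remain a countable union. The automorphism group of this variety has $\dim\operatorname{Aut}^0\le\dim\operatorname{Aut}^0(X)$, because $\operatorname{Aut}^0$ of a blow-up at points injects into $\operatorname{Aut}^0$ of the base. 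Hence $\varphi$ is trivial, a contradiction.

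\textbf{Case $|T_p|\le\dim\operatorname{Aut}^0(X)$.} Contract all preserved exceptional curves $E_{q_j}$, $j\in T_q$, and $E_{p_i}$, $i\in T_p$. This yields $W'=\operatorname{Bl}_{q_j\,(j\notin T_q),\ p_i\,(i\notin T_p)}X_0$ together with an involution $\varphi'$ over $C$ that preserves none of the exceptional divisors. We apply Proposition \ref{Propo1} with $d=s-|T_q|\le s$ and with the $r-|T_p|\ge r_0(X_0,d)$ points $p_i$, $i\notin T_p$. These points are general in their fibres over the fixed distinct points $z_i$, because very general points of $X^r$ remain general after fibering over $C^r$. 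Proposition \ref{Propo1} then gives a contradiction.

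I expect the main obstacle to be the genericity bookkeeping in the second case. Proposition \ref{Propo1} is stated for fixed base points $z_i$ and points general in the fibres. We must convert this into a statement about very general points of $X^r$, taking a union over the countably many choices of $z$-configurations and the finitely many choices of $(T_p,T_q)$. The descent steps also need checking: contracting disjoint preserved $(-1)$-curves on a surface gives a $\varphi$-equivariant contraction.
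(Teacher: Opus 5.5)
Your proposal is correct and is essentially the paper's argument. You first use Theorem \ref{permute} to show every nontrivial automorphism over $C$ is an involution, then bound the number of preserved exceptional curves via Theorem \ref{permute} (as in Corollary \ref{r'}) and the number of swapped ones via Proposition \ref{Propo1}. Your case split on $|T_p|$ with the explicit bound $r\ge\max_d r_0(X_0,d)+\dim\operatorname{Aut}^0(X)+1$, and working on $\operatorname{Bl}_{p_i:i\notin T_p}X$ instead of contracting down to $X_k$, only repackage the paper's two inequalities more cleanly. The genericity bookkeeping you flag is glossed over to the same extent in the paper.
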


\begin{proof}
Let $X\xrightarrow{\eta_k}X_k$ be the contractions of $X$ over $X_0$, for
$0\le k\le n$. For $0\leq i\leq s$, let $r_0(X_0,s)$ be as in Proposition \ref{Propo1}, and $A$ be the maximum of them. Let $r_0'(X_k,j)$ be as in Corollary \ref{r'} for $0\leq k\leq n$ and $0\leq j\leq A$. Choose
\[
r>A+\sum_{j=0}^{A}\sum_{k=0}^{n}r_0'(X_k,j).
\]
We show this works.

Let $w_i=\pi(q_i)$, and let $p_1,\ldots,p_r\in X$ be very general. Let $Y=\operatorname{Bl}_{\underline{p}}X$, and suppose $Y$ has a nontrivial automorphism $\varphi$. We want to get a contradiction.

Let $E_i'\subset X$ be the exceptional divisor over $q_i$ for $1\le i\le s$, and $E_j\subset Y$ be the exceptional divisor over $p_j$ for $1\le j\le r$. Note that $Y\to X$ is an isomorphism over $E_i'$. By abuse of notation we will denote the inverse image of $E_i'$ in $Y$ again by $E_i'$. Since $p_j$'s are very general, all $E_i'$ and $E_j$ lie in distinct fibres of $Y$ over $C$.

Without loss of generality, assume $\varphi(E_i')=E_i'$ if and only if $1\le i\le s'$, and $\varphi(E_j)=E_j$ if and only if $1\le j\le r'$, where $0\le s'\le s$ and $0\le r'\le r$ are integers. Since the fibres of $Y$ over $C$ have at most $2$ irreducible components, $\varphi^2$ preserves the irreducible components of each fibre, hence preserves each $E_j$. So, $\varphi^2$ descends to an automorphism of $X$ fixing each $p_j$, so $\varphi^2=\mathrm{id}$ as $r>r_0'(X,0)$. Hence $\varphi$ is an involution.

Let $0\le k\le n$ be such that $\eta:=\eta_k:X\to X_k$ is the contraction of $E_1',\ldots,E_{s'}'$. Let $Y\to Y'$ be the contraction of $E_1',\ldots,E_{s'}',E_1,\ldots,E_{r'}$, with $E_j$ contracted to $\widetilde{\eta(p_j)}$. Thus we have an induced map $Y'\to X_k$ which is the blow-up along the points $\eta(p_j)$ for $r'<j\le r$. As the $p_j$'s are very general in $X$, the points $\eta(p_j)$ are very general in $X_k$. Also, $\varphi$ induces a nontrivial involution $\varphi'$ of $Y'$ with $\varphi'(\widetilde{\eta(p_j)})=\widetilde{\eta(p_j)}$ for $1\le j\le p'$. So, by Lemma \ref{permute}, we have
\begin{equation}\label{rr01}
r' < r_0'(X_k,r-r'). 
\end{equation}

Also, by the construction of $Y'$, we see that the involution $\varphi'$ does not preserve any $(-1)$-curve lying in a fibre over $C$. As $Y'$ is the blow-up of $X_0$ at the points $q_i$ ($s'<i\le s$) and $\eta_0(p_j)$ ($r'<j\le r$), and the points $\eta_0(p_j)$ are very general, by Proposition \ref{Propo1} we get
\begin{equation}\label{rr02}
r-r' < r_0(X_0,s-s') \le A.
\end{equation}

Now \eqref{rr01} and \eqref{rr02} give
\[
r <A + r_0'(X_k,r-r')
   \le A+\sum_{j=0}^{A}\sum_{k=0}^{n} r_0'(X_k,j)<r,
\]
a contradiction. 
\end{proof}
\section{Proof of Theorem \ref{A}}
Let $\overline{X}$ be the normalization of $X$, and let
$\overline{p}_1,\ldots,\overline{p}_r$ be the preimages of
$p_1,\ldots,p_r$ on $\overline{X}$. Note that the normalization of
$\operatorname{Bl}_{\underline{p}}X$ is $\operatorname{Bl}_{\underline{\overline{p}}}\overline{X}$,
hence any automorphism of $\operatorname{Bl}_{\underline{p}}X$ induces an automorphism
of $\operatorname{Bl}_{\overline{\underline{p}}}\overline{X}$. So, replacing $X$ by
$\overline{X}$, We can assume $X$ is normal.

If $X$ is a surface, by the same argument we can replace $X$ by its minimal
resolution to assume $X$ is smooth.

Let $\widetilde{X}=\operatorname{Bl}_{\underline{p}}X$,
$\pi:\widetilde{X}\to X$ the blow-up map, and
$\varphi\in\operatorname{Aut}(\widetilde{X})$.

Now we consider three cases separately.

\noindent\textbf{Case 1:} $\dim X\ge 3$.

Let $X\xlongrightarrow{g}Y$ be the contraction of all $(-1)$-hypersurfaces of
$X$, as in Lemma \ref{contr hypersurface}. Since by Lemma \ref{contr hypersurface} any $(-1)$-hypersurface of
$\widetilde{X}$ is either contained in $\operatorname{Ex}(\pi)$ or disjoint
from $\operatorname{Ex}(\pi)$, we see that
$g\circ\pi:\widetilde{X}\to Y$ is the contraction of all
$(-1)$-hypersurfaces of $\widetilde{X}$, as in Lemma \ref{contr hypersurface}.

So, $\varphi$ descends to an automorphism $\psi$ of $Y$. The finite set $\operatorname{Fundloc}(g\circ\pi)$ is $\psi$-stable, hence
$\psi(\{g(p_1),\ldots,g(p_r)\})\subseteq
\{g(p_1),\ldots,g(p_r)\}\cup\operatorname{Fundloc}(g)$.

As $g(p_1),\ldots,g(p_r)\in Y$ are very general, by Theorem \ref{permute} we get
$\psi=\operatorname{id}$ whenever $r\gg0$. So, $\varphi=\operatorname{id}$.

\medskip

\noindent\textbf{Case 2:} $X$ is a non-uniruled surface.

 Let $X\xrightarrow{g}Y$ be the minimal model of $X$.
The birational map $\psi=(g\pi)\circ\varphi\circ(g\pi)^{-1}:Y\dashrightarrow Y$
is an isomorphism by \cite[Theorem 3.52(2)]{KM}.

The rest is exactly similar as in Case 1.

\noindent\textbf{Case 3:} $X$ is a uniruled surface.

As $X$ is not rational, there is a ruled surface structure $\pi:X\to C$ over a smooth nonrational curve $C$. Let $S\subset C$ be the finite set of points over which the fibre of $\pi$ is not irreducible. Let $p_1,\ldots,p_r\in X$ be very general, $Y=\operatorname{Bl}_{\underline{p}} X$, and $\pi':Y\to X\xrightarrow{\pi}C$ the natural map. Any automorphism $\varphi$ of $Y$ descends to an automorphism $\overline{\varphi}$ of $C$, as the fibres of $\pi'$ are connected and unions of rational curves, so has no nonconstant morphism to $C$. Further, $\overline{\varphi}$ preserves the set
 $S\cup\{\pi(p_1),\ldots,\pi(p_r)\}\subset C$, which is the set of points over which
the fibres of $\pi'$ are not irreducible.

If $p_1,\ldots,p_r$ are very general, so are $\pi(p_1),\ldots,\pi(p_r)$, so by Lemma \ref{permute}, $\overline{\varphi}=\mathrm{id}$ if $r\gg0$. Thus $\varphi$ is an automorphism over $C$.

Let $S'\subseteq S$ be the set of points over which the fibre of $\pi$ has $\geq 3$ irreducible components. An easy application of \cite[Proposition 1.18]{hassett2009rational} shows that the $(-1)$-curves in $X$ lying in $\pi^{-1}(S')$ are disjoint, let $X\xrightarrow{\nu}X'$ be the contraction of all of them. Similarly, the $(-1)$-curves in $Y$ lying in $(\pi')^{-1}(S')$ are also disjoint, let $Y\to Y'$ be the contraction of all of them. Also, $\varphi$ permutes these $(-1)$-curves, so $\varphi$ induces a nontrivial automorphism of $Y'$ over $C$. Note that $\eta(p_i)$'s are also very general, and $Y'$ is the blow-up of $X'$ at the points $\eta(p_i)$'s. So, replacing $X$ by $X'$, we may assume $S'=S$, that is, each fibre of $\pi$ has at most $2$ irreducible components.

In this case, there is a minimal ruled surface $\pi_0:X_0\to C$ and points $q_1,\ldots,q_s\in X_0$ lying in distinct fibres of $\pi_0$, such that $X=\operatorname{Bl}_{\underline{q}} X_0$. Now we are done by Proposition \ref{propo2}.

This completes the proof.
\begin{remark}
Theorem \ref{A} does not hold if we assume $p_1,p_2,\ldots,p_r$ are general, rather than very general, as the following example shows. This is in contrast to the generic triviality of automorphism group of hypersurface mentioned in the introduction, where it suffices to choose the hypersurface to be general.

Let \(X\) be an abelian variety of dimension \(\ge 2\) such that there is $\varphi\in\operatorname{Aut}(X)$
of infinite order. Let \(r\) be any positive integer. Since the torsion points of the abelian variety \(X^r\) are Zariski dense, given any  Zariski closed subset $Z\subsetneq X^r,$
we can find torsion points $p_1,\ldots,p_r\in X$
such that $(p_1,\ldots,p_r)\notin Z.$
There is \(N\in\mathbb{N}\) with $p_1,\ldots,p_r\in X[N],$
the finite group of \(N\)-torsion points of $X$.
Since \(\varphi\) preserves \(X[N]\), some iterate $\psi$
of \(\varphi\) is a nontrivial automorphism of \(X\) fixing \(X[N]\) pointwise, hence fixing each \(p_i\). Therefore, \(\psi\) induces a nontrivial automorphism of $\operatorname{Bl}_{p_1,\ldots,p_r}X.$
\end{remark}
\begin{remark}
Theorem \ref{A} does not hold if we drop the assumption that \(X\) is projective. As an example, let $X=\mathbb{A}^3$ $p_i=({q_i},\alpha_i),$
where \({q_i}\in \mathbb{A}^2\), \(\alpha_i\in \mathbb{A}^1\), and assume the \(q_i\)'s are distinct. There is $P(x,y)\in\mathbb{C}[x,y]$
with $P(q_i)=\alpha_i$
for all \(i\). Now define $\varphi\in\operatorname{Aut}(\mathbb{A}^3)$
by
\[
\varphi(q,\alpha)=\bigl(q,\;2P(q)-\alpha\bigr),
\qquad
(q,\alpha)\in \mathbb{A}^2\times \mathbb{A}^1=\mathbb{A}^3.
\]
Note that $\varphi(p_i)=p_i$
for all \(i\). Hence \(\varphi\) induces a nontrivial automorphism of $\operatorname{Bl}_{p_1,\ldots,p_r}(\mathbb{A}^3).$
\end{remark}
\section{An application}
Let $X$ be a smooth projective variety. Using \cite[Theorem 2.2]{wisniewski1991contractions}, it is easy to see that $X$ can have only finitely many projective bundle structures. When $X$ is a surface, it is natural to ask whether $X$ has only finitely many elliptic bundle structures. When $K_X\not\equiv 0$, using canonical bundle formula, it is easy to see that $X$ can have at most one minimal elliptic bundle structures. If $K_X\equiv 0$, $X$ can in fact have infinitely many minimal elliptic bundle structures as some abelian and K3 surfaces show, but by \cite[Theorem 4.1]{Totaro2010ConeConjecture} $X$ has only finitely many minimal elliptic bundle structures upto automorphisms of $X$. Thus in any case $X$ has only finitely many minimal elliptic bundle structures upto automorphisms of $X$.

Now let us consider possibly non-minimal elliptic bundle structures on $X$. In the following example we show that $X$ can have infinitely many elliptic bundle structures which are distinct modulo automorphims of $X$.

Let $E$ be an elliptic curve, $A=E^2$, an abelian surface, and let $X$ be the blow-up of $A$ at sufficiently many very general points. By Theorem \ref{A}, $X$ has no nontrivial automorphism. On the other hand, the action of $GL(2,\mathbb{Z})$ on $A$ shows that $A$ has infinitely many elliptic bundle structures, hence composing them with the blow-up map $X\to A$, we see that $X$ also has infinitely many elliptic bundle structures.

Though Theorem \ref{A} is not known for rational surfaces, by some argument similar as above but slightly more delicate, one can also construct a smooth projective rational surface $X$ with infinitely many elliptic bundle structures which are distinct modulo automorphims of $X$, by looking at some blow up of an unnodal Coble surface as in \cite{cantat2012rational}.

\section{Acknowledgement}
I thank János Kollár, John Lesieutre and Daniel Litt for insightful discussions.
\printbibliography
\vspace{40pt}
\begin{flushleft}
{\scshape Department of Mathematics, Fine Hall, Princeton University, Princeton, NJ 700108, USA}.

{\fontfamily{cmtt}\selectfont
\textit{Email address: ss6663@princeton.edu} }
\end{flushleft}
\end{document}